\numberwithin{equation}{section}
\theoremstyle{plain}
\newtheorem{Thm}{Th\'eor\`eme}[section]
\newtheorem{Prop}[Thm]{Proposition}
\newtheorem{theorem}[Thm]{Theorem}
\newtheorem{corollary}[Thm]{Corollary}
\newtheorem{lemma}[Thm]{Lemma}
\newtheorem{proposition}[Thm]{Proposition}
\newtheorem{Claim}[Thm]{Claim}
\theoremstyle{definition}
\newtheorem{Def}[Thm]{D\'efinition}
\newtheorem{definition}[Thm]{Definition}
\newtheorem{Remark}[Thm]{Remark}
\theoremstyle{remark}
\newcommand{\diam}{\operatorname{diam}}
\newcommand{\id}{\operatorname{id}}
\newcommand{\stab}{\operatorname{Stab}}
\newcommand{\sym}{\operatorname{Sym}}
\newcommand{\shadow}{\operatorname{Shadow}}
\newcommand{\Ac}{\mathcal A}
\newcommand{\Fc}{\mathcal F}
\newcommand{\Hc}{\mathcal H}
\newcommand{\Uc}{\mathcal U}
\newcommand{\Kbar}{\overline{K}}
\newcommand{\Ybar}{\overline{Y}}
\newcommand{\N}{\mathbf{N}}
\newcommand{\Z}{\mathbf{Z}}
\newcommand{\R}{\mathbf{R}}
\newcommand{\F}{\mathbf{F}}
\newcommand{\Out}{\mathrm{Out}}
\newcommand{\Aut}{\mathrm{Aut}}
\newcommand{\Inn}{\mathrm{Inn}}
\newcommand{\SL}{\mathrm{SL}}
\newcommand{\PSL}{\mathrm{PSL}}
\newcommand{\GL}{\mathrm{GL}}
\begin{document}
\selectlanguage{english}

\title[Highly transitive actions of free products]{Highly transitive actions of free products}
\author{Soyoung Moon}
\author{Yves Stalder}
\thanks{The second-named author has been partially supported by ANR grant ``AGORA''}
\date{\today}
\address{Soyoung Moon \newline
Université de Bourgogne, Institut de Mathématiques de Bourgogne, UMR 5584 du CNRS, B.P.
47870, 21078 Dijon cedex, France}
\email{soyoung.moon@u-bourgogne.fr}
\address{Yves Stalder \newline Clermont Universit\'e, Universit\'e Blaise Pascal, Laboratoire de
Math\'ematiques, BP 10448,\newline F-63000 Clermont-Ferrand, France\newline
CNRS, UMR 6620, Laboratoire de Math\'ematiques, F-63177 Aubi\`ere, France}
\email{yves.stalder@math.univ-bpclermont.fr}

\begin{abstract}
 We characterize free products admitting a faithful and highly transitive action. In particular, we show that the group $\PSL_2(\Z)\simeq (\Z/2\Z)*(\Z/3\Z)$ admits a faithful and highly transitive action on a countable set.
\end{abstract}

\maketitle

\section*{Introduction}
Let $X$ be a countable\footnote{In this paper, ``countable'' means ``infinite countable''.} set and let $G$ be a countable group acting on $X$. The action is called \emph{highly transitive} if, for all $k\in\N^*$, it is transitive on
ordered $k$-tuples of distinct elements.

Dixon proved \cite{Dixon} that for any integer $k\geq 2$, generically in Baire's sense, $k$ permutations $x_1,\ldots,x_k\in\sym(\N)$ such that the subgroup $\langle x_1,\ldots,x_k \rangle$ acts without finite orbits generate a free group of rank $k$ which acts highly transitively on $\N$. Adapting this approach, Kitroser \cite{Kitroser} showed that the fundamental groups of surfaces of genus at least 2 admit a faithful and highly transitive action.

Garion and Glasner \cite{GarionGlasner} proved that for $n \geq 4$ the group of outer automorphisms of free group on $n$ generators $\Out(\F_n) = \Aut(\F_n)/ \Inn(\F_n)$
admits a faithful and highly transitive action. They asked whether $\Out(\F_2)\simeq \GL_2(\Z)$ and $\Out(\F_3)$ admit a highly transitive action.
In this paper, with methods in Dixon's spirit, we prove the following result.
\begin{theorem}\label{main}
 Let $G,H$ be non-trivial finite or countable groups. Then, the following statements are equivalent:
 \begin{enumerate}
  \item the free product $G*H$ admits a faithful and highly transitive action;
  \item at least one of the factors $G,H$ is not isomorphic to the cyclic group $\Z/2\Z$.
 \end{enumerate}
\end{theorem}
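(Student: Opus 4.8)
I argue by contraposition. If $G\cong H\cong\Z/2\Z$ then $G*H$ is the infinite dihedral group $D_\infty=\Pres{a,b}{a^2=b^2=1}$, and $D_\infty$ admits no faithful action on a countable set that is even $2$-transitive. Indeed, suppose it acted faithfully and $2$-transitively, hence primitively, on a countable set $X$. The infinite cyclic subgroup $N=\Engend{ab}$ is normal, so the partition of $X$ into $N$-orbits is $D_\infty$-invariant; by primitivity it is trivial, and as the action is faithful and $N\ne\{1\}$ the $N$-orbits are not singletons, so $N$ is transitive. Since $X$ is infinite the point stabiliser of $N$ in $N\cong\Z$ is trivial, so $X$ identifies with $\Z$ on which $N$ acts by translations. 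From $a(ab)a^{-1}=(ab)^{-1}$ one gets that $a$ acts on $\Z$ by an affine involution $n\mapsto c-n$, and likewise for $b$; hence the action is conjugate to the standard action of $D_\infty$ on $\Z$, which is not $2$-transitive (the stabiliser of $0$ is at most $\{\id,\ n\mapsto-n\}$, which sends $2$ to $-2$) -- a contradiction.

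\textbf{The hard implication $(2)\Rightarrow(1)$: the set-up.} By symmetry assume $H\not\cong\Z/2\Z$; thus $H$ has two distinct non-trivial elements $h_1,h_2$, and fix also a non-trivial $g_0\in G$. Then $G*H$ is not virtually cyclic, so it is infinite and contains infinite cyclic subgroups of infinite index, e.g. $\Engend{g_0h_1}$ -- this ``extra room'' is what will make the construction work. The plan is a Baire category argument ``in Dixon's spirit'', carried out on a carefully chosen space. Endow $\sym(\N)$ with the topology of pointwise convergence and set
\[
 \mathcal Y=\bigl\{(\alpha,\beta)\in\mathrm{Hom}(G,\sym(\N))\times\mathrm{Hom}(H,\sym(\N))\ :\ \alpha*\beta\ \text{has torsion-free stabilisers and no finite orbit}\bigr\}.
\]
Both defining properties are $G_\delta$ conditions (torsion-freeness of stabilisers says that each of the countably many non-trivial torsion elements of $G*H$ is fixed-point-free), so $\mathcal Y$ is a Polish space, and it is non-empty since the left regular action of the countably infinite group $G*H$ on itself lies in it. Faithfulness of $\alpha*\beta$ is the countable intersection of the open sets $A_w=\{\,\alpha*\beta(w)\ne\id\,\}$ ($w\in G*H\setminus\{1\}$), and high transitivity is the countable intersection of the open sets $B_{\bar a,\bar b}=\{\,\exists\,u\in G*H,\ \alpha*\beta(u)\bar a=\bar b\,\}$, where $\bar a,\bar b$ range over tuples of pairwise distinct elements of $\N$. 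By Baire's theorem it suffices to show that every $A_w$ and every $B_{\bar a,\bar b}$ is dense in $\mathcal Y$; a comeagre set of $(\alpha,\beta)\in\mathcal Y$ then yields a faithful highly transitive action of $G*H$ on $\N$, in particular for $G*H\cong\PSL_2(\Z)$.

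\textbf{Reduction to an extension lemma.} A basic non-empty open subset of $\mathcal Y$ is given by ``admissible'' finite data $\mathcal D$: finitely many partial bijections $\alpha(g_i),\beta(h_j)$ prescribed on finite sets, jointly extendable to some $(\alpha,\beta)\in\mathcal Y$ (so $\mathcal D$ prescribes no torsion fixed point and no finite invariant set). Density of $A_w$, resp. $B_{\bar a,\bar b}$, reduces to: \emph{any admissible $\mathcal D$ extends to some $(\alpha,\beta)\in\mathcal Y$ for which some word moves a point, resp. for which $\alpha*\beta(u)\bar a=\bar b$ for some $u$.} For $B_{\bar a,\bar b}$ (the $A_w$ case is just the special case of routing a single fresh point around a loop) one builds a reduced word $u=SMP$ and extends $\alpha,\beta$ so as to ``route'' each $a_i$ to $b_i$ along it: the prefix $P$ drives the (automatically distinct) partial orbits of $a_1,\dots,a_k$ out of the finite support of $\mathcal D$ to distinct fresh points $a_i'$; the suffix $S$ drives $b_1,\dots,b_k$ backwards to distinct fresh points $b_i''$; and a short middle word $M$ -- one or two syllables, chosen in the appropriate factor(s) so that $u$ stays reduced at the seams -- joins $a_i'$ to $b_i''$ inside the fresh region, where the needed transitions are simply declared, completing cycles with brand-new points when a factor has torsion. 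Finally one scatters the remaining points of $\N$ into new orbits on which $G$ and $H$ act freely, so that the resulting pair lies in $\mathcal Y$; then $\alpha*\beta(u)\bar a=\bar b$.

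\textbf{Where the difficulty lies.} The one genuinely delicate step is the ``escape'': while a routed point's frontier still sits inside the finite support of $\mathcal D$, one must be able to choose an admissible next syllable at which the prescribed partial bijection is \emph{not} defined, so that the frontier actually advances to fresh territory. Here the two elements $h_1,h_2$ of $H$ together with $g_0\in G$ provide enough choices, and the absence of torsion fixed points forbids the frontier from being caught in a finite cycle of $\mathcal D$; combined with the fact that $G*H$ has infinite infinite-index subgroups, this is precisely where the hypothesis on $H$ is used. If instead $G\cong H\cong\Z/2\Z$, admissible data can lock a routed point into a torsion-free stabiliser, which in $D_\infty$ must be trivial (its torsion-free subgroups are trivial or of finite index, and finite index is excluded by having no finite orbit), so that $B_{\bar a,\bar b}$ can even be \emph{empty} -- in agreement with $(1)\Rightarrow(2)$. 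The remaining verifications -- non-emptiness of $\mathcal Y$, compatibility of torsion-free stabilisers with high transitivity, consistency of the partially defined permutations of prescribed finite order, distinctness of the fresh points, simultaneity of the $k$ escapes, and reducedness of the concatenated words -- are routine.
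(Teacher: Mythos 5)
Your implication $(1)\Rightarrow(2)$ is correct and complete, though it takes a different route from the paper: the paper simply observes that $D_\infty$ contains a cyclic subgroup of index $2$, and that any finite-index subgroup of a group with a faithful highly transitive action would itself act faithfully and highly transitively (Corollary \ref{GeneralFactsFHT}), which is impossible for an abelian group since such an action forces trivial centre. Your primitivity argument is longer but valid.

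The implication $(2)\Rightarrow(1)$, however, is a plan rather than a proof, and the one step you defer --- the ``extension lemma'' --- is precisely where all of the theorem's difficulty sits. Two concrete problems. First, your space $\mathcal Y$ varies \emph{both} homomorphisms $\alpha$ and $\beta$, so each time you ``simply declare'' a new transition $\alpha(g)(p)=q$ you must verify that the enlarged finite data remains extendable to an honest homomorphism of the (arbitrary countable, possibly torsion, possibly infinitely related) group $G$; you give no mechanism for this, and it is not automatic. The paper sidesteps the issue entirely by freezing the group structure: when both factors are infinite it fixes free transitive $G$- and $H$-actions once and for all and perturbs only a single conjugating permutation $\sigma\in\sym(X)$ (Propositions \ref{FreeProductInfiniteOrbit} and \ref{FreeProductFaithful}), so that ``extending data'' means extending a partial bijection; when one factor is finite it fixes the $G$-action on $X=\bigsqcup_j G_j$ and varies only the action of the \emph{finite} group $H$, and even then it needs the Bass--Serre tree and the shadow combinatorics of Claims \ref{Claim2}--\ref{Claim4} to perform the surgery consistently. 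Second, your stated reason why a routed point can always escape the support of $\mathcal D$ --- ``the absence of torsion fixed points forbids the frontier from being caught in a finite cycle of $\mathcal D$'' --- is false: if $H=\Z/3\Z=\langle h\rangle$ and $\mathcal D$ prescribes $\beta(h)$ as a $3$-cycle $(p\,q\,r)$ inside the support, then $h$ is fixed-point-free there, yet from $p$ both non-trivial elements of $H$ land back in the support. What actually rules out entrapment is the ``no finite $\langle G,\sigma(H)\rangle$-orbit'' condition exploited globally through the Neumann lemma (Lemma \ref{DeplacementPartiesFinies}), as in Claim \ref{Claim3}; and even then one must control the interaction of the $k$ forward trajectories, the $k$ backward trajectories, and the forced completions of the $H$-orbits (the sets $A_j$ and the orphaned points that the paper redefines as fixed points), none of which appears in your sketch. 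As it stands, the proof of $(2)\Rightarrow(1)$ is missing its substance.
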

In particular, the group $\PSL_2(\Z)\simeq (\Z/2\Z)*(\Z/3\Z)$ admits a faithful and highly transitive action. As a consequence, the group $\SL_2(\Z)$ admits a highly transitive action on a countable set. On the other hand, this group cannot admit faithful and highly transitive actions since it has non-trivial center (see Corollary \ref{GeneralFactsFHT}).

The paper is organized as follows. Section 1 contains preliminaries about highly transitive actions and Baire's theory. Sections 2 and 3 are devoted to the proof of Theorem \ref{main}.

We thank Georges Skandalis for suggesting Corollary \ref{NotSolvable} and Alain Valette for pointing out a small mistake in the last proof in the first version of this paper.

\section{Preliminaries}\label{Preliminaries}
\subsection{Generalities about group actions}
Let us begin with a general fact concerning actions with infinite orbits.
\begin{lemma}\textbf{\emph{(B. H. Neumann, P. Neumann)}}\label{DeplacementPartiesFinies}
Let $G$ be a group acting on some set $X$ and $F$ be a finite subset of $X$. If every orbit of the points in $F$ is infinite, then there exists $g\in G$ such that $g\cdot F \cap F = \emptyset$.
\end{lemma}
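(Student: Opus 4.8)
The plan is to reduce the statement to a covering result for cosets. Put
\[
B=\{g\in G\ :\ g\cdot F\cap F\neq\emptyset\},
\]
so that the lemma amounts to the assertion $B\neq G$. Writing $F=\{x_1,\dots,x_n\}$, one has $B=\bigcup_{i,j}\{g\in G:g\cdot x_i=x_j\}$, a finite union. For each pair $(i,j)$, the set $\{g\in G:g\cdot x_i=x_j\}$ is either empty or, after fixing one solution $g_{ij}$, equal to the left coset $g_{ij}\stab_G(x_i)$ (indeed $g\cdot x_i=x_j=g_{ij}\cdot x_i$ iff $g_{ij}^{-1}g\in\stab_G(x_i)$). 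Hence $B$ is a finite union of left cosets of the point stabilizers $\stab_G(x_1),\dots,\stab_G(x_n)$. The hypothesis enters now: since the orbit of each $x_i$ is infinite, $[G:\stab_G(x_i)]=|G\cdot x_i|=\infty$. So it suffices to establish B.~H.~Neumann's covering lemma: \emph{a group is never the union of finitely many left cosets of subgroups of infinite index}; applying it to $B$ gives $B\neq G$, i.e.\ some $g$ with $g\cdot F\cap F=\emptyset$.

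\textbf{Proof of the covering lemma.} I would argue by induction on the number $r$ of distinct subgroups occurring in the covering. If $r=1$, the cosets involved are pairwise equal or disjoint, there are infinitely many cosets in all, and only finitely many are used, so they cannot cover $G$. For the inductive step, suppose $G=\bigcup_{i=1}^m g_iH_i$ where the distinct subgroups among the $H_i$ are $K_1,\dots,K_r$ with $r\geq 2$ and each $[G:K_j]=\infty$; group the cosets according to which $K_j$ they are cosets of. Since $[G:K_1]=\infty$, pick $a\in G$ with $aK_1$ different from every $K_1$-coset appearing in the covering; then $aK_1$ is disjoint from all the $K_1$-cosets used, hence $aK_1$ is contained in the union of the $K_j$-cosets with $j\geq 2$. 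Left-translating this inclusion by $a^{-1}$ expresses $K_1$ as a finite union of cosets of $K_2,\dots,K_r$, and left-translating once more by each $g_k$ whose coset $g_kK_1$ occurs, one replaces all the $K_1$-cosets in the original covering by finitely many cosets of $K_2,\dots,K_r$. This yields a covering of $G$ by finitely many left cosets of the $r-1$ infinite-index subgroups $K_2,\dots,K_r$, contradicting the induction hypothesis. This proves the covering lemma and hence the stated lemma.

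\textbf{Main obstacle.} The genuine content is the covering lemma; the reduction above is formal. Within its proof, the delicate point is the bookkeeping of the inductive step: correctly manipulating \emph{left} cosets, choosing which subgroup to eliminate, and checking that after translation the new (larger but still finite) family of cosets still covers all of $G$. One could alternatively try a direct induction on $|F|$, peeling off one point of $F$ at a time, but the inductive step then requires knowing that the set of $g$ already moving $F\setminus\{x_n\}$ off itself — which is $G$ minus finitely many cosets of infinite-index stabilizers — is not swallowed by finitely many further such cosets, i.e.\ exactly the covering lemma again; so there is no shortcut avoiding it.
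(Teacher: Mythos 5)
Your proof is correct, and the reduction is exactly the one in the paper: the set of $g$ with $g\cdot F\cap F\neq\emptyset$ is a finite union of cosets of point stabilizers, each of infinite index by the orbit--stabilizer theorem and the hypothesis on orbits. (The paper writes these as right cosets $\stab(y)g_{xy}$ of the stabilizers of the target points, you as left cosets $g_{ij}\stab(x_i)$ of the stabilizers of the source points; both are valid and the covering lemma applies to either.) The only genuine difference is that where the paper simply invokes Lemma 4.1 of B.~H.~Neumann's paper on groups covered by cosets, you prove that covering lemma from scratch by induction on the number of distinct subgroups involved. Your induction is sound: the base case uses that distinct left cosets of a single infinite-index subgroup form an infinite partition, and the inductive step correctly translates a missed coset $aK_1$ to replace all $K_1$-cosets by finitely many cosets of the remaining subgroups (just phrase the induction hypothesis as ``at most $r-1$ distinct subgroups'' so the final application is literal). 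The payoff of your version is self-containedness; the cost is that you reprove a classical lemma the paper is content to cite.
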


\begin{proof}
 This follows from Lemma 4.1 in \cite{NeumannCovered} and Lemma 2.3 in \cite{NeumannFinitary}. Indeed, let us suppose that for every $g\in G$, $gF\cap F\neq \emptyset$. If we denote $K_{xy}:=\{g\in G | gx=y\}$, $\forall x$, $y\in F$, then by hypothesis we have $G=\bigcup_{x,y\in F}K_{xy}$. When $K_{xy}\neq \emptyset$, we have $K_{xy}=\stab(y)g_{xy}$ with some $g_{xy}\in K_{xy}$. Then
 $$
 G=\bigcup_{\text{$x,y\in F$ such that $K_{xy}\neq \emptyset$}}\stab(y)g_{xy}.
 $$
 Then by Lemma 4.1 in \cite{NeumannCovered}, there exists $y\in F$ such that the index of $\stab(y)$ is finite. Therefore the orbit $Gy$ is finite.
\end{proof}

From the above lemma, it follows immediately that:
\begin{Remark}\label{DeplacementDeuxPartiesFinies}
Let $X$ be a $G$-set and $F_1$, $F_2$ be finite subsets of $X$. If every orbit of the points in $F_1$ and $F_2$ are infinite, then there exists $g\in G$ such that $g\cdot F_1 \cap F_2 = \emptyset$.
\end{Remark}

\subsection{Highly transitive actions}
Let $G$ be a group acting on some set $X$. Let us recall that the action is called \emph{faithful} if the corresponding homomorphism $G\to \sym(X)$ is injective and \emph{transitive} if for any $x,y \in X$, there exists $g\in G$ such that $g\cdot x = y$. Given a positive integer $k$, we set
\[
 X^{(k)} = \big\{(x_1,\ldots, x_k)\in X^k : \  x_i \neq x_j \text{ for all } i\neq j\big\} \ ,
\]
and the action $G\curvearrowright X$ is called \emph{$k$-transitive} if the diagonal $G$-action on $X^{(k)}$ is transitive.
\begin{definition}\label{HighlyTransitive}
 Assume that $G$ and $X$ are countable.
 The action $G\curvearrowright X$ is called highly transitive if it is $k$-transitive for any positive integer $k$.
\end{definition}
Defining highly transitive actions on a finite set $Y$ would not be interesting, since $Y^{(k)}$ is empty for all $k>|Y|$.

We are interested to determine which groups admit highly transitive actions respectively faithful and highly transitive actions. Here are some general facts, which are probably well-known by experts; see e.g. \cite[Section 5.1]{GarionGlasner} for item (2).
\begin{proposition}\label{GeneralFactsIHT}
 Let $G\curvearrowright X$ be a highly transitive action. Then:
 \begin{enumerate}
  \item any central element of $G$ acts trivially;
  \item for any normal subgroup $K\lhd G$, the action $K\curvearrowright X$ is either trivial, or highly transitive.
  \item for any finite index subgroup $H<G$, the action $H\curvearrowright X$ is highly transitive.
 \end{enumerate}
\end{proposition}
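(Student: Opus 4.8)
The plan is to treat the three items in increasing order of difficulty, using the key principle that high transitivity of $G \curvearrowright X$ means the image of $G$ in $\sym(X)$ is dense for the topology of pointwise convergence on the (discrete) set $X$, equivalently $k$-transitive for all $k$. For item (1), let $z$ be central and suppose $z \cdot x \neq x$ for some $x \in X$; put $y = z\cdot x \neq x$. Since the action is $2$-transitive, pick $g \in G$ with $g \cdot x = x$ and $g \cdot y = y'$ for some $y' \notin \{x, y\}$ (possible as soon as $|X| \geq 3$, which holds since $X$ is infinite). Then $zg \cdot x = z \cdot x = y$ while $gz \cdot x = g \cdot y = y' \neq y$, contradicting $zg = gz$. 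Hence $z$ acts trivially.

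For item (2), let $K \lhd G$ act non-trivially and fix $k \geq 1$; I must show $K$ is $k$-transitive on $X$. Take two tuples $(x_1,\dots,x_k), (y_1,\dots,y_k) \in X^{(k)}$. Because $G$ is $k$-transitive, there is $g \in G$ sending the first tuple to the second, so it suffices to show that the $K$-orbit of any tuple in $X^{(k)}$ already equals its $G$-orbit; equivalently, that $G = K \cdot \stab_G(\underline{x})$ where $\underline{x} = (x_1,\dots,x_k)$ and $\stab_G(\underline{x})$ is the pointwise stabilizer of the tuple. The subgroup $K\cdot\stab_G(\underline{x})$ contains $K$, so it is normal in $G$ if one checks it is a subgroup — this is where I would be careful: $K \cdot \stab_G(\underline{x})$ is automatically a subgroup since $K$ is normal. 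Its image in $\sym(X)$ is then a normal subgroup of the dense (hence $k$-transitive) group $G/\ker$, and it acts non-trivially (as $K$ does and $K$ has infinite orbits on $X$ by item (1) applied to... no — rather, since $K\lhd G$ acts non-trivially, its orbits are $G$-invariant up to the $G$-action permuting them, so by transitivity of $G$ all $K$-orbits have the same cardinality; if that cardinality were finite, $X$ would be a finite union of finite sets unless there is one orbit, impossible as $X$ is infinite — so $K$ has a single infinite orbit or infinitely many, in any case no finite orbit). The cleanest route: a non-trivial normal subgroup of a highly transitive group on $X$ has all orbits infinite (by the orbit-permuting argument plus item (1) forcing the action through $G/Z$), then apply Lemma 1.2 (Neumann–Neumann) to move $\{x_1,\dots,x_k\}$ off itself inside $\stab$-cosets, upgrading transitivity of $K$ step by step from $(k-1)$ to $k$. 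I expect this inductive upgrade, combined carefully with the Neumann–Neumann lemma, to be the main obstacle.

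For item (3), let $H < G$ have finite index and fix $k$. Replacing $H$ by $\bigcap_{g} gHg^{-1}$ (a finite-index normal subgroup contained in $H$), it suffices to prove the claim for normal finite-index $H$; but a finite-index normal subgroup acts non-trivially, since otherwise $G$ itself would act through the finite group $G/H$ and could not be $k$-transitive on the infinite set $X$ for $k > |G/H|$. Then item (2) applies and gives that $H$ acts highly transitively, whence so does the possibly larger group $H$ in the original (non-normal) case, because a subgroup containing a highly transitive subgroup is itself highly transitive. So item (3) reduces to item (2) plus this elementary reduction.

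In summary: item (1) is a direct $2$-transitivity argument; items (2) and (3) both rest on the assertion that a non-trivial normal subgroup of a highly transitive group has no finite orbit — proved via the orbit-permutation argument and item (1) — followed by an application of Lemma 1.2 to run an induction on $k$. The delicate point I would write out in full is precisely this induction step in (2): given $(k-1)$-transitivity of $K$, deduce $k$-transitivity by using Remark 1.3 to find $h \in K$ with $h \cdot \{x_1,\dots,x_{k-1}\} \cap \{x_1,\dots,x_{k-1}\}$ suitably controlled relative to the target tuple, then composing with a stabilizer element supplied by $G$'s high transitivity.
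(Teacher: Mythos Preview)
Your treatments of (1) and (3) are correct and match the paper's almost verbatim: a direct $2$-transitivity contradiction for (1), and for (3) passing to the normal core and observing that a finite-index normal subgroup cannot act trivially on an infinite transitive $G$-set, then invoking (2).

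For (2), however, your plan has two genuine gaps. First, the argument that $K$ has no finite orbit is wrong as written: from ``$G$ permutes the $K$-orbits transitively, so they all have the same cardinality'' you cannot conclude that this common cardinality is infinite. An infinite $X$ can perfectly well be partitioned into infinitely many $K$-orbits of the same finite size $m\geq 2$; nothing in your sentence rules this out. (The parenthetical invoking item (1) does not help either.) What actually excludes this is that $2$-transitivity implies primitivity, so the $G$-invariant partition into $K$-orbits is trivial; since $K$ acts non-trivially it is not the partition into singletons, hence $K$ has a single orbit, namely $X$. The paper gives this directly: pick $k\in K$, $x\in X$ with $kx\neq x$, and for any $y\neq x$ use $2$-transitivity to find $g\in G$ with $gx=x$, $gy=kx$; then $g^{-1}kg\in K$ sends $x$ to $y$. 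Second, and relatedly, the claim that $K\cdot\stab_G(\underline{x})$ is normal in $G$ because it contains $K$ is false: containing a normal subgroup does not make a subgroup normal.

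More seriously, the inductive upgrade from $(k-1)$- to $k$-transitivity of $K$ that you sketch is too vague to be a plan: ``composing with a stabilizer element supplied by $G$'s high transitivity'' produces an element of $G$, not of $K$, and you do not explain how to get back into $K$. The paper sidesteps induction entirely with a one-shot argument using $2k$-transitivity of $G$: once $K$ is transitive (hence has a single infinite orbit, so Lemma~\ref{DeplacementPartiesFinies} applies to $K$), choose $h\in K$ with $\{hy_1,\dots,hy_k\}$ disjoint from $\{x_1,\dots,x_k,y_1,\dots,y_k\}$ and $h'\in K$ with $\{h'z_1,\dots,h'z_k\}$ disjoint from $\{z_1,\dots,z_k\}$; then both $(x_1,\dots,x_k,hy_1,\dots,hy_k)$ and $(z_1,\dots,z_k,h'z_1,\dots,h'z_k)$ lie in $X^{(2k)}$, so some $g\in G$ sends the first to the second, giving $y_i = h^{-1}g^{-1}h'g\,x_i$ with $h^{-1}g^{-1}h'g\in K$ by normality. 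This is the key idea you are missing.
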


\begin{proof}
(1) Let $g$ be an element of $G$ which acts non-trivially and let $x_1\in X$ such that $x_1$ and $x_2:=gx_1$ are distinct. Let $y_1, y_2\in X$ such that $y_2$ is distinct from $y_1$ and $gy_1$ (this is possible since $X$ is infinite). Then, by high transitivity, there is an element $h \in G$ such that $hx_1=y_1$ and $hx_2=y_2$. We have
\[
 hgx_1 = hx_2 = y_2 \quad \text{and} \quad ghx_1 = gy_1\neq y_2 \ ,
\]
which proves that $g$ is not a central element.

(2) Suppose that the action is not trivial, i.e. that there exists $x\in X$ and $k\in K$ such that $x\neq kx$. For any $y\in X$ different from $x$, there exists $g\in G$ such that $gx=x$ and $gy=kx$. Then $g^{-1}kgx=y$ and therefore $y$ is in $K\cdot x$ by normality of $K$ in $G$. This proves that the action $K\curvearrowright X$ is transitive.

Let $x=(x_1,\ldots,x_k)$ and $y=(y_1,\ldots,y_k)$ in $X^{(k)}$. By Lemma \ref{DeplacementPartiesFinies}, there exists $h\in K$ such that
$$\{hy_1,\ldots,hy_k\} \cap \big(\{y_1,\ldots,y_k\}\cup \{x_1,\ldots,x_k\}\big)=\emptyset.
 $$
 Then $(x_1,\ldots, x_k, hy_1,\ldots, hy_k)$ is in $X^{(2k)}$. Take $(z_1,\ldots,z_k)\in X^{(k)}$. Again by Lemma \ref{DeplacementPartiesFinies}, there exists $h'\in K$ such that $\{h'z_1,\ldots,h'z_k\}\cap \{z_1,\ldots,z_k\}=\emptyset$. Then $(z_1,\ldots,z_k, h'z_1,\ldots,h'z_k)$ is in $X^{(2k)}$. Since the $G$-action on $X$ is highly transitive, there exists $g\in G$ such that
$$
g(x_1,\ldots, x_k, hy_1,\ldots, hy_k)=(z_1,\ldots,z_k, h'z_1,\ldots,h'z_k).
$$
Then $z_i=gx_i$ and $ghy_i=h'z_i=h'gx_i$, so
$$
y_i=h^{-1}g^{-1}h'gx_i,
$$
for every $i=1,\ldots,k$. Since $K$ is normal in $G$, the element $h^{-1}g^{-1}h'g$ is in $K$ and therefore $K\curvearrowright X$ is highly transitive.

(3) There exists a normal subgroup $K\lhd G$, contained in $H$, which has finite index in $G$. It cannot act trivially since $[G:K]$ is finite and the unique $G$-orbit is infinite. Thus the assertion follows from (2).
\end{proof}

For faithful and highly transitive actions, we have the following straightforward corollary:

\begin{corollary}\label{GeneralFactsFHT}
 Assume that $G\curvearrowright X$ is a faithful and highly transitive action. Then:
 \begin{enumerate}
  \item the center $Z(G)$ is trivial;
    \item for any non-trivial normal subgroup $K\lhd G$, the action $K\curvearrowright X$ is faithful and highly transitive;
  \item for any finite index subgroup $H<G$, the $H$-action on $X$ is faithful and highly transitive.
 \end{enumerate}
\end{corollary}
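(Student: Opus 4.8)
The plan is to deduce each of the three assertions directly from the corresponding item of Proposition~\ref{GeneralFactsIHT}, with faithfulness playing the single role of eliminating the degenerate ``trivial action'' alternatives. Throughout I would rely on the elementary observation that the restriction of a faithful action $G\curvearrowright X$ to any subgroup is again faithful, since the associated homomorphism to $\sym(X)$ is just the restriction of an injective one.

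For (1), Proposition~\ref{GeneralFactsIHT}(1) says that every element of $Z(G)$ acts trivially on $X$; by faithfulness the only element acting trivially is the identity, so $Z(G)=\{1\}$. For (2), let $K\lhd G$ be non-trivial and choose $k\in K\setminus\{1\}$; faithfulness guarantees that $k$ moves some point of $X$, hence the action $K\curvearrowright X$ is not trivial, and Proposition~\ref{GeneralFactsIHT}(2) then forces it to be highly transitive — it is faithful by the remark above. For (3), Proposition~\ref{GeneralFactsIHT}(3) already gives that $H\curvearrowright X$ is highly transitive, and faithfulness is again inherited from the restriction.

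I do not expect any genuine obstacle here: all the substantive content (the dichotomy for normal subgroups, the high transitivity of finite index subgroups, and the triviality of the central action) has been established in Proposition~\ref{GeneralFactsIHT}. The only step that deserves an explicit word is the passage from ``$K$ is non-trivial'' to ``$K$ acts non-trivially'', which is precisely the place where faithfulness is used, and it is immediate.
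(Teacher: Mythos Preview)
Your proposal is correct and is exactly the argument the paper has in mind: the corollary is stated as a ``straightforward'' consequence of Proposition~\ref{GeneralFactsIHT} with no proof given, and the only additional input is precisely that faithfulness passes to subgroups and rules out the trivial-action alternative in items (1) and (2).
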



\begin{corollary}\label{NotSolvable}
 If $G\curvearrowright X$ is a faithful and highly transitive action, then $G$ is not solvable.
\end{corollary}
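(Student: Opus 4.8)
The plan is to derive a contradiction from solvability by running down the derived series and applying Corollary \ref{GeneralFactsFHT}. First I would observe that $G$ cannot be trivial: the action is in particular transitive and $X$ is infinite, so $G\neq\{1\}$. Assuming now that $G$ is solvable, its derived series $(G^{(i)})_{i\geq 0}$ reaches $\{1\}$ after finitely many — but, since $G\neq\{1\}$, strictly positively many — steps. Let $n\geq 1$ be minimal with $G^{(n)}=\{1\}$, and set $A:=G^{(n-1)}$. By minimality, $A$ is non-trivial; since $[A,A]=G^{(n)}=\{1\}$, the group $A$ is abelian; and $A$, being a term of the derived series, is characteristic, hence normal, in $G$.

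The next step is to pass to the action of $A$. By Corollary \ref{GeneralFactsFHT}(2) applied to the non-trivial normal subgroup $A\lhd G$, the restricted action $A\curvearrowright X$ is again faithful and highly transitive. Then I would apply Proposition \ref{GeneralFactsIHT}(1) (equivalently Corollary \ref{GeneralFactsFHT}(1)) to this action of $A$: every central element of $A$ acts trivially on $X$. But $A$ is abelian, so $Z(A)=A$, which forces the whole group $A$ to act trivially on $X$. Since $A\neq\{1\}$, this contradicts the faithfulness of the $A$-action, and the contradiction shows that $G$ is not solvable.

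I do not expect a genuine obstacle here: the statement is a formal consequence of the facts already established in the previous corollary and proposition. The only points requiring a word of care are the harmless remark that $G\neq\{1\}$ (needed so that the derived series really has a last non-trivial term) and the fact that all the subgroups involved are countable, so that the notion of highly transitive action of Definition \ref{HighlyTransitive} applies to them — both are immediate. As an alternative one could work with a minimal non-trivial normal \emph{abelian} subgroup instead of the last term of the derived series, but the derived-series formulation is cleaner and avoids any existence argument.
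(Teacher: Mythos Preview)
Your argument is correct and is essentially the same as the paper's: both run along the derived series and use Corollary~\ref{GeneralFactsFHT}(1)--(2) to see that a non-trivial term $G^{(k)}$ inherits a faithful highly transitive action and hence cannot be abelian. The only difference is cosmetic---the paper phrases this as a forward induction (``$G^{(k)}\neq 1 \Rightarrow G^{(k+1)}\neq 1$'') while you phrase it as a contradiction at the last non-trivial term.
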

\begin{proof}
 For any $n\in \N$, the $n$-th derived subgroup $G^{(n)}$ is a normal subgroup of $G$. If $G^{(k)}$ is non-trivial, then it acts highly transitively on $X$ by Corollary \ref{GeneralFactsFHT} (2), so that it is non-abelian, by Corollary \ref{GeneralFactsFHT} (1). Hence  $G^{(k+1)}$ is non-trivial. This proves (by induction) that $G$ is not solvable.
\end{proof}

Notice that if $G$ contains a finite index subgroup which admits a faithful and highly transitive action, this does \textbf{not} imply that $G$ itself admits a faithful and highly transitive action. For example, $\SL_2(\Z)$ has a free subgroup of index $12$, but does not admit a faithful and highly transitive action since its center is non-trivial.

\subsection{Baire spaces}\label{PrelimBaire}
Let $X$ be a countable set. For any enumeration $X=\{x_0,x_1,x_2,\ldots\}$, one can consider the distance on the group $\sym(X)$ defined by
\[
 d(\sigma,\tau) = 2^{- \inf\{k\in\N :\, \sigma(x_k) \neq \tau(x_k) \text{ or } \sigma^{-1}(x_k) \neq \tau^{-1}(x_k)\}} \, .
\]
Then, $\sym(X)$ becomes a complete ultrametric space and a topological group. Note that a sequence $(\sigma_n)$ in $\sym(X)$ converges to a permutation $\sigma$ if and only if, given any finite subset $F\subset X$, the permutations $\sigma$ and $\sigma_n$, respectively $\sigma^{-1}$ and $\sigma_n^{-1}$, coincide on $F$ for $n$ large enough. Hence the topology on $\sym(X)$ is independent of the chosen enumeration. One can notice that a subgroup $\Gamma$ of $\sym(X)$ is dense if and only if the $\Gamma$-action on $X$ is highly transitive.

As a complete metrizable space, $\sym(X)$ is a \emph{Baire space}, that is a topological space in which every countable intersection of dense open subsets is still dense. In such a space, a countable intersection of dense open subsets is called \emph{generic subset}, or \emph{co-meager subset}, while its complement (that is countable union of closed sets with empty interior) is called \emph{meager subset}. In particular, generic subsets are dense, thus non-empty.

The case of free products $G*H$ with two infinite factors (see Section \ref{InfiniteFactors}) will be treated by genericity arguments in $\sym(X)$. For the case of free products $G*H$ with a finite factor, we need to consider a clever Baire space that we introduce now.
Let us consider two non-trivial finite or countable groups $G,H$ and assume that $X$ is endowed with some $G$-action such that it is isomorphic (in the category of $G$-sets) to $G\times \N$, where $G$ acts by left multiplication on the first factor. The product $\sym(X)^H$ admits the following complete metric:
\[
d((\sigma_h)_{h\in H},(\tau_h)_{h\in H})= \max\{d(\sigma_h,\tau_h):\, h\in H\} \, ,
\]
where $\sym(X)$ is endowed with the metric defined above. One can again see that the topology on $\sym(X)^H$ does not depend on the choice of an enumeration of $X$. Moreover, when $H$ is finite, this topology coincides with the product topology. The set of $H$-actions on $X$ identifies then with the subset $\operatorname{Hom}(H,\sym(X))\subset \sym(X)^H$. It is easy to check that this subset is closed in $\sym(X)^H$, hence is a complete metrizable space.
\begin{definition}
 Let $X$ be a $G$-set. We call an action $\sigma: H\to\sym(X)$ \textit{admissible} if all orbits $\langle G,\sigma(H) \rangle$ in $X$ are infinite.

 The set of admissible actions will be denoted by $\Ac(G,H,X)$.
\end{definition}
Notice that $\Ac(G,H,X)$ is non-empty. Indeed, if we identify $\N$ to $G\backslash(G*H)$ (which is indeed countable), $X$ is identified (as a $G$-set) to $G*H$. Then the $H$-action by left multiplications on $G*H$ corresponds to a $H$-action on $X$ which is admissible.
\begin{lemma}
 The space $\Ac(G,H,X)$ is a complete metrizable space.
\end{lemma}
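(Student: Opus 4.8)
The plan is to realize $\Ac(G,H,X)$ as a $G_\delta$ subset of the complete metric space $\operatorname{Hom}(H,\sym(X))$ and then to invoke the classical fact that a $G_\delta$ subset of a completely metrizable space is itself completely metrizable (Alexandrov's theorem). Since $\operatorname{Hom}(H,\sym(X))$ has already been observed to be closed in $\sym(X)^H$, hence complete, the whole task reduces to expressing the admissibility condition on $\sigma$ as a countable intersection of open conditions.

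First I would fix a point $x\in X$ and an integer $n\geq 1$ and consider the set $U_{x,n}$ of those $\sigma\in\operatorname{Hom}(H,\sym(X))$ for which the orbit $\langle G,\sigma(H)\rangle\cdot x$ contains at least $n$ points. I claim $U_{x,n}$ is open. Indeed, if $\sigma_0\in U_{x,n}$, choose words $w_1,\dots,w_n$ in the letters $G\cup H$ such that the points $w_i\cdot x$, evaluated using the fixed $G$-action and the $H$-action $\sigma_0$, are pairwise distinct. These finitely many words of bounded length involve only finitely many elements of $H$ and, when evaluated, visit only a finite subset $F\subseteq X$. Any $\sigma$ for which the relevant permutations $\sigma(h)$ and their inverses agree with those of $\sigma_0$ on $F$ — which describes an open neighbourhood of $\sigma_0$ in $\sym(X)^H$ — produces the same points $w_i\cdot x$, hence also lies in $U_{x,n}$.

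Then I would observe that the orbit of $x$ is infinite exactly when $\sigma\in\bigcap_{n\geq 1}U_{x,n}$, and that $\sigma$ is admissible exactly when every orbit is infinite, i.e. when $\sigma\in\bigcap_{x\in X}\bigcap_{n\geq 1}U_{x,n}$. As $X$ is countable, this exhibits $\Ac(G,H,X)$ as a countable intersection of open subsets of $\operatorname{Hom}(H,\sym(X))$, i.e. a $G_\delta$. Alexandrov's theorem then provides a complete metric on $\Ac(G,H,X)$ inducing the subspace topology, while metrizability is inherited from $\sym(X)^H$; this finishes the proof.

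The only delicate point is the openness of $U_{x,n}$, and specifically the bookkeeping that evaluating a word of bounded length at $x$ only ever touches a finite, controllable part of $X$, so that agreement of the permutations $\sigma(h)$ (and of their inverses, as built into the metric on $\sym(X)$) on that finite part suffices to reproduce the orbit point. Everything else is a routine appeal to the topology of $\sym(X)$ and to Alexandrov's theorem. One should be careful to use the ``completely metrizable'' formulation of that theorem rather than to assert that the ambient metric restricts to a complete one, which in general it need not.
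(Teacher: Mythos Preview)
Your proof is correct, but it takes a different and slightly less direct route than the paper. The paper simply shows that $\Ac(G,H,X)$ is \emph{closed} in $\operatorname{Hom}(H,\sym(X))$: if $\sigma$ had a finite $\langle G,\sigma(H)\rangle$-orbit $F$, then any $\sigma'$ whose components agree with those of $\sigma$ on $F$ (an open condition in the sup-metric) would also leave $F$ invariant, hence would fail admissibility; thus non-admissibility is open and $\Ac(G,H,X)$ is closed. Closedness immediately gives completeness for the \emph{restricted} ambient metric, with no need for Alexandrov's theorem. Your caution that ``one should not assert that the ambient metric restricts to a complete one'' is therefore unnecessary here: it does restrict, precisely because the set is closed, a fact your $G_\delta$ argument does not detect. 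On the other hand, your approach is more robust---it would still go through for admissibility-type conditions that are genuinely $G_\delta$ but not closed---so it is a reasonable template to keep in mind.
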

In particular, the space $\Ac(G,H,X)$  is a Baire space.
\begin{proof}
 It suffices to check that $\Ac(G,H,X)$ is closed in $\operatorname{Hom}(H,\sym(X))$. To do so, let us consider a sequence $(\sigma_n)_{n\in\N}$ in $\Ac(G,H,X)$ which converges to $\sigma \in \operatorname{Hom}(H,\sym(X))$ and prove that $\sigma$ is an admissible action.
 Firstly, if we assume that $F$ is a finite orbit of the subgroup $\langle G,\sigma(H) \rangle$, then for $n$ large enough, the components of $\sigma_n$ (and their inverses) would coincide with the components of $\sigma$ (and their inverses) on $F$ and $F$ would be a finite orbit of the subgroup $\langle G,\sigma_n(H) \rangle$, which is impossible since $\sigma_n$ is an admissible action.
 Secondly, if $x_1,x_2$ are two distinct points in the intersection of some $G$-orbit and some $\sigma(H)$-orbit, then for $n$ large enough, $x_1$ and $x_2$ would belong to the intersection of some $G$-orbit and some $\sigma_n(H)$-orbit, which is again impossible since $\sigma_n$ is an admissible action.
\end{proof}

\section{Case with two infinite factors}\label{InfiniteFactors}
The aim of this section is to prove the following result.
\begin{theorem}\label{FreeProductInfinite}
If $G$ and $H$ are countable groups, then the free product $G\ast H$ admits a faithful and highly transitive action.
\end{theorem}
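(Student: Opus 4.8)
The plan is to prove this by a Baire category argument in the spirit of Dixon, in which only one of the two factors is conjugated. Fix, on a countable set $X$, a free $G$-action $\alpha$ with infinitely many orbits and, independently, a free $H$-action $\beta_0$ with infinitely many orbits (both exist since $G$ and $H$ are infinite: identify $X$ with $G\times\N$, resp.\ with $H\times\N$). Then every orbit of $\alpha$ and of $\beta_0$ is infinite, and $\alpha(g)$, $\beta_0(h)$ are fixed-point-free for $g\neq 1$, $h\neq 1$. For $g\in\sym(X)$ set $\beta_g:=g\,\beta_0(\cdot)\,g^{-1}$, a faithful $H$-action whose orbits are again all infinite; by the universal property of the free product, $\alpha$ and $\beta_g$ induce a homomorphism $\pi_g\colon G\ast H\to\sym(X)$ with image $\langle\alpha(G),\beta_g(H)\rangle$. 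Since a subgroup of $\sym(X)$ is dense precisely when it acts highly transitively, it suffices to produce a single $g$ for which $\pi_g$ is injective and $\langle\alpha(G),\beta_g(H)\rangle$ is dense; note that, conjugation being applied to one factor only, this subgroup genuinely varies with $g$.

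For each nontrivial reduced word $w\in G\ast H$ set $U_w:=\{g\in\sym(X):\pi_g(w)\neq\id\}$, and for each $k\geq 1$ and each $(\bar x,\bar y)\in X^{(k)}\times X^{(k)}$ set $V_{k,\bar x,\bar y}:=\bigcup_{v\in G\ast H}\{g\in\sym(X):\pi_g(v)\bar x=\bar y\}$. There are countably many such sets, and a permutation $g$ lying in all of them is exactly one for which $\pi_g$ is faithful and highly transitive; as $\sym(X)$ is a Baire space, it is therefore enough to show that each $U_w$ and each $V_{k,\bar x,\bar y}$ is open and dense. Openness is immediate: evaluating $\pi_g(w)$ or $\pi_g(v)$ at a point uses only finitely many values of $g$ and of $g^{-1}$ — one of each per letter of the word lying in $H$, since $\beta_g(h)\cdot y=g\big(\beta_0(h)(g^{-1}y)\big)$ — so whether the result equals a prescribed point is locally constant in $g$.

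Density is the heart of the matter. Note that $d(g,g_0)\leq 2^{-N}$ holds as soon as $g$ and $g^{-1}$ agree with $g_0$ and $g_0^{-1}$ on $\{x_0,\dots,x_N\}$, and that $g_0$ can be altered at finitely many points outside a suitable finite set $E\supseteq\{x_0,\dots,x_N\}$ and still be a permutation; so we may reassign finitely many values of $g$, always routing through $\alpha$- and $\beta_0$-orbits disjoint from $E$. For $U_w$, write $w=s_1\cdots s_n$ reduced and alternating, pick $x_*$ in an $\alpha$-orbit disjoint from $E$, and compute $s_n\cdot x_*$, then $s_{n-1}s_n\cdot x_*$, and so on: whenever a letter from $H$ is applied, use the still-unassigned values of $g^{-1}$ and then of $g$ to send the current point into a fresh $\beta_0$-orbit and back out into a fresh $\alpha$-orbit (reducedness guarantees that these intermediate queries never clash), and finally arrange $\pi_g(w)\cdot x_*$ to lie in an $\alpha$-orbit other than that of $x_*$; then $g\in U_w$. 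For $V_{k,\bar x,\bar y}$, take $v=a_2\,\beta_g(h)\,a_1$ with $a_1,a_2\in\alpha(G)$ and $h\in H\setminus\{1\}$: by Remark \ref{DeplacementDeuxPartiesFinies} (all $\alpha$-orbits being infinite) choose $a_2$ so that $a_2^{-1}\{y_1,\dots,y_k\}$ is disjoint from $E$, then $a_1$ so that $a_1\{x_1,\dots,x_k\}$ is disjoint from $E\cup a_2^{-1}\{y_1,\dots,y_k\}$; fix any $h\neq 1$ and points $p_1,\dots,p_k$ lying in pairwise distinct far-away $\beta_0$-orbits, and define $g$ by $g(p_i)=a_1x_i$ and $g(\beta_0(h)p_i)=a_2^{-1}y_i$ (a legitimate finite modification, the $2k$ source points being distinct since $\beta_0$ is free and the $p_i$ lie in distinct orbits). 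Then $\beta_g(h)(a_1x_i)=g\big(\beta_0(h)(p_i)\big)=a_2^{-1}y_i$, hence $\pi_g(v)\bar x=\bar y$ and $g\in V_{k,\bar x,\bar y}$. In both cases $d(g,g_0)\leq 2^{-N}$ by construction.

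I expect the genuine difficulty to lie in the density statements — in carrying out the back-and-forth bookkeeping so that the finitely many values that the successive letters (and, for $V_{k,\bar x,\bar y}$, the three factors $a_1,\beta_g(h),a_2$) impose on $g$ are simultaneously consistent with one permutation still agreeing with $g_0$ on $\{x_0,\dots,x_N\}$. This is exactly where it matters that $G$ and $H$ are infinite: it forces every orbit of $\alpha$ and $\beta_0$ to be infinite, which supplies the room needed to keep routing through fresh points. When one of the factors is finite this room disappears, which is why the next section treats that case separately, via the Baire space $\Ac(G,H,X)$.
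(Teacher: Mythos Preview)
Your proposal is correct and takes essentially the same approach as the paper: a Baire category argument in $\sym(X)$ in which the $H$-action is conjugated by a varying permutation, with faithfulness and high transitivity each written as a countable intersection of open dense sets $U_w$ and $U_{k,\bar x,\bar y}$. The paper likewise uses a length-three word $g_2^{-1}hg_1$ for the density of the transitivity sets and carries out in detail the bookkeeping you flag as the crux; the only cosmetic difference is that it starts from free \emph{transitive} actions of $G$ and $H$ rather than free actions with infinitely many orbits.
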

It will be a direct consequence of two propositions in the following setting. Let $X$ be a countable set and let $G,H$ be two subgroups of $\sym(X)$. For any $\sigma\in \sym(X)$, let us consider the action ${\phi_{\sigma}:G\ast H\rightarrow \sym(X)}$ defined by
\[
\phi_{\sigma}(w)=w^{\sigma}:=g_1\sigma^{-1}h_1\sigma\cdots g_k\sigma^{-1}h_k\sigma
\]
where $w=g_1h_1\cdots g_k h_k$ with $g_1,\ldots,g_k\in G$ and $h_1,\ldots,h_k\in H$.

\begin{Prop}\label{FreeProductInfiniteOrbit}
Suppose that every orbit of $G$ and $H$ on $X$ is infinite. Then
\[
 \Hc := \{\sigma\in\sym(X): \phi_\sigma \text{ is highly transitive}\}
\]
is generic in $\sym(X)$.
\end{Prop}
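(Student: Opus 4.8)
The plan is to realise $\Hc$ as a countable intersection of dense open subsets of $\sym(X)$ and then quote the Baire property. For $k\geq 1$ and $\bar a=(a_1,\dots,a_k),\ \bar b=(b_1,\dots,b_k)\in X^{(k)}$ I would set
\[
U_{k,\bar a,\bar b}=\bigl\{\sigma\in\sym(X):\ \exists\,w\in G\ast H \text{ with } \phi_\sigma(w)a_i=b_i \text{ for all } i\bigr\}.
\]
Since $X$ is countable each $X^{(k)}$ is countable, and $\phi_\sigma$ is highly transitive precisely when $\sigma$ lies in every $U_{k,\bar a,\bar b}$; hence $\Hc$ is a countable intersection of these sets and it is enough to check that each is open and dense.

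Openness is immediate: one has $U_{k,\bar a,\bar b}=\bigcup_{w}V_w$ with $V_w=\{\sigma:\phi_\sigma(w)a_i=b_i\ \forall i\}$, and for a fixed $w$ the evaluation of $\phi_\sigma(w)$ on the $a_i$ uses only finitely many applications of $\sigma$ and $\sigma^{-1}$; so any $\sigma'$ agreeing with a given $\sigma\in V_w$ (together with the inverses) on the finite set of points that occur in these evaluations again lies in $V_w$. Thus each $V_w$, and hence each $U_{k,\bar a,\bar b}$, is open.

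For density, fix $\sigma_0\in\sym(X)$ and a basic neighbourhood of it, given by a finite set $E\subseteq X$; put $E'=E\cup\sigma_0^{-1}(E)$, so that any $\sigma$ coinciding with $\sigma_0$ on $E'$ lies in that neighbourhood. I would look for such a $\sigma$, together with $g\in G$ and $h\in H$, so that $\phi_\sigma(gh)=g\sigma^{-1}h\sigma$ sends each $a_i$ to $b_i$. Using that every $G$-orbit is infinite, Lemma \ref{DeplacementPartiesFinies} gives $g\in G$ with $\{g^{-1}b_1,\dots,g^{-1}b_k\}$ disjoint from the finite set $E'\cup\{a_1,\dots,a_k\}$; using that every $H$-orbit is infinite, Lemma \ref{DeplacementPartiesFinies} again gives $h\in H$ with $\{h\sigma_0(a_1),\dots,h\sigma_0(a_k)\}$ disjoint from the finite set $E'\cup\sigma_0(E')\cup\{a_1,\dots,a_k\}\cup\{\sigma_0(a_i)\}_i\cup\{g^{-1}b_i\}_i\cup\{b_1,\dots,b_k\}$. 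I would then define $\sigma$ by declaring $\sigma=\sigma_0$ on $E'$, $\sigma(a_i)=\sigma_0(a_i)$, and $\sigma(g^{-1}b_i)=h\sigma_0(a_i)$; the two displacement choices make these prescriptions mutually consistent and injective, so they form a finite partial injection of the countable set $X$, which extends to a permutation $\sigma\in\sym(X)$. This $\sigma$ agrees with $\sigma_0$ on $E'$, and
\[
g\sigma^{-1}h\sigma(a_i)=g\sigma^{-1}\bigl(h\sigma_0(a_i)\bigr)=g\bigl(g^{-1}b_i\bigr)=b_i
\]
for all $i$; moreover $g\neq e$ and $h\neq e$ by Lemma \ref{DeplacementPartiesFinies}, so $gh$ is a genuine length-two word and $\phi_\sigma(gh)$ is indeed $g\sigma^{-1}h\sigma$. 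Hence $\sigma\in U_{k,\bar a,\bar b}$ meets the chosen neighbourhood, which proves density.

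I expect the only delicate point to be this last construction: the intermediate points through which one routes $a_i\mapsto b_i$ must be kept away from the finite set where $\sigma$ is already pinned down by $\sigma_0$, and this has to be arranged simultaneously for all $k$ coordinates — which is exactly what the Neumann--Neumann lemma (Lemma \ref{DeplacementPartiesFinies}) supplies for whole finite subsets. The remaining ingredients (the reduction, openness, and the Baire conclusion) are formal.
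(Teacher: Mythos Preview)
Your argument is correct and follows the same Baire-category scheme as the paper: write $\Hc=\bigcap U_{k,\bar a,\bar b}$, observe openness by continuity of $\sigma\mapsto\phi_\sigma(w)$, and prove density using the Neumann--Neumann lemma to route the $a_i$ to the $b_i$ through points avoiding the frozen finite set. The only cosmetic difference is that you succeed with a length-two word $gh$ and a ``partial injection, then extend'' construction, whereas the paper uses a length-three word $g_2^{-1}hg_1$ together with an auxiliary intermediate set $M$ and defines the new permutation as an explicit modification of $\tau$; your version is slightly leaner but not a genuinely different route.
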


\begin{Prop}\label{FreeProductFaithful}
Suppose that every non trivial element of $G$ and $H$ has infinite support. Then the set
$$
\Fc=\{\sigma\in \sym(X) \mid \phi_\sigma \textrm{ is faithful } \}
$$
is generic in $\sym(X)$.
\end{Prop}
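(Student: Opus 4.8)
The plan is to write $\Fc$ as a countable intersection of open dense sets, indexed by the non-trivial elements of $G*H$. For each $w \in G*H \setminus \{1\}$, set
\[
\Fc_w := \{\sigma \in \sym(X) : w^\sigma \neq \id\} = \{\sigma \in \sym(X) : \exists\, x \in X,\ w^\sigma(x) \neq x\}.
\]
Since $G*H$ is countable, $\Fc = \bigcap_{w \neq 1} \Fc_w$, so by the Baire property it suffices to show each $\Fc_w$ is open and dense. Openness is immediate: if $w^\sigma(x_0) \neq x_0$ for some $x_0$, then for any $\tau$ close enough to $\sigma$ (so that $\tau$ and $\tau^{-1}$ agree with $\sigma$ and $\sigma^{-1}$ on a large enough finite set depending on $x_0$ and the syllables of $w$), one still has $w^\tau(x_0) = w^\sigma(x_0) \neq x_0$. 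So the real content is density.

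To prove density of $\Fc_w$, I would fix $\sigma \in \sym(X)$ and a basic neighbourhood of it, i.e. a finite set $F \subset X$ on which $\sigma, \sigma^{-1}$ are prescribed, and produce $\tau$ agreeing with $\sigma$ on $F$ (and $\tau^{-1}$ with $\sigma^{-1}$ on $F$) with $w^\tau \neq \id$. Write the reduced word $w = g_1 h_1 \cdots g_k h_k$ (after a cyclic reduction / conjugation argument one may assume it genuinely alternates and is nontrivial; one must handle the degenerate shapes where $w$ begins or ends with a $G$-syllable or an $H$-syllable separately, but these are cosmetic). The strategy is to choose a point $x_0$ outside a large finite "forbidden" set, and then to build a trajectory $x_0 \xrightarrow{h_k} x_1 \xrightarrow{\sigma} x_2 \xrightarrow{g_k} \cdots$ step by step, at each application of $\sigma$ or $\sigma^{-1}$ defining $\tau$ (or $\tau^{-1}$) on a fresh point to equal a fresh point, so that all the points involved are distinct and distinct from $F$ and its $\sigma$-images. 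This is where the hypotheses are used: because every non-trivial element of $G$ and of $H$ has infinite support, the partial orbit maps $x \mapsto g_i x$ and $x \mapsto h_i x$ move infinitely many points, so I can always find a starting $x_0$ whose entire intended itinerary avoids the finitely many constrained points and stays injective; and because $X$ is infinite, there is always a fresh target value available when extending $\tau$. The upshot is a permutation $\tau$ (the partial injection built this way extends to a permutation since it is a finite injective partial map with the same finite domain/range behaviour off a cofinite set) which agrees with $\sigma$ near $F$ but sends $x_0$ to a point $\neq x_0$ under $w^\tau$.

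The main obstacle is the bookkeeping in the density step: one must arrange that the syllable maps $g_i^{\pm}, h_i^{\pm}$ (which are \emph{fixed}, not under our control) together with the finitely many newly-imposed values of $\tau$ never create an unwanted coincidence that would let the word collapse to the identity at $x_0$ — in particular one has to be careful when some $g_i$ or $h_i$ happens to fix a point we are forced to pass through, or when the word is not cyclically reduced. The clean way to manage this is to first replace $w$ by a conjugate that is cyclically reduced (a conjugate of a non-identity element is non-identity, and $w^\sigma$ is conjugate to $(w')^\sigma$ in $\sym(X)$ for the corresponding $\sigma$), then choose $x_0$ and its itinerary greedily from the complement of the finite set $F \cup \sigma(F) \cup \sigma^{-1}(F)$ together with all points already used, invoking infiniteness of supports each time a syllable map is applied and infiniteness of $X$ each time $\sigma$ is (re)defined. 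I expect the rest — openness, the reduction to cyclically reduced words, and assembling the countable intersection — to be routine.
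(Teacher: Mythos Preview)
Your proposal is correct and follows essentially the same route as the paper: decompose $\Fc=\bigcap_{w\neq 1}U_w$, note openness, and for density build by hand a non-returning $w$-trajectory from a fresh point, using the infinite-support hypothesis at each $g_i$- and $h_i$-step to keep all points distinct and outside $F\cup\sigma^{\pm1}(F)$, while freely assigning $\tau$ on the $\sigma$-steps. The only cosmetic difference is that you reduce to a cyclically reduced conjugate (valid since $(u^{-1}wu)^\sigma=(u^\sigma)^{-1}w^\sigma u^\sigma$), whereas the paper keeps the general shape $w=g_kh_k\cdots g_1h_1$ with possibly trivial end syllables and treats the length-two case $w=gh$ separately.
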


\begin{proof}[Proof of Theorem \ref{FreeProductInfinite} based on the propositions.]
 Let $G,H$ be countable groups; let $X$ be the countable set considered above. One can endow $X$ with a $G$-action and a $H$-action which are both transitive and free. Then, $G$ and $H$ can be identified with their images in $\sym(X)$.
 Moreover, by Propositions \ref{FreeProductInfiniteOrbit} and \ref{FreeProductFaithful}, we can take a permutation $\sigma\in \Hc\cap\Fc$ (in fact, $\Hc\cap\Fc$ is generic in $\sym(X)$); the $G*H$-action $\phi_\sigma$ is then highly transitive and faithful.
\end{proof}

\begin{proof}[Proof of Proposition \ref{FreeProductInfiniteOrbit}.]
 For every $k\in\N^*$ and $x=(x_1,\ldots,x_k)$, $y=(y_1\ldots,y_k)\in X^{(k)}$, let
\[
 U_{k,x,y} = \{\sigma\in\sym(X): \, \exists w\in G*H \text{ such that } w^\sigma(x_i)=y_i
\ \forall i=1,\ldots,k\}  \ .
\]
Since $\Hc = \bigcap_{k\in\N^*}\bigcap_{x,y\in X^{(k)}}U_{k,x,y}$, it is enough to prove that the set $U_{k,x,y}$ is open and dense.

Let $\sigma\in  U_{k,x,y}$ and let $w$ such that $ w^{\sigma}(x_i)=y_i$ for every
$i=1,\ldots,k$. The map $\sigma\mapsto w^\sigma$ is continuous and the inverse image of the open set
$\{\alpha\in\sym(X): \, \alpha(x_i)=y_i \ \forall i=1, \ldots,k\}$ contains $\sigma$ and is contained in $U_{k,x,y}$. Thus the set $U_{k,x,y}$ is a neighborhood
of $\sigma$ and this shows that $U_{k,x,y}$ is open.

Let us show that $U_{k,x,y}$ is dense. Let $F\subset X$ be a finite subset of $X$ and $\tau\in \sym(X)$. Given a subset $Y\subseteq X$, we denote by $\tau^{\pm 1}(Y)$ the union $\tau(Y)\cup\tau^{-1}(Y)$. Let
$I=\{x_1,\ldots,x_k\}$ and $J=\{y_1,\ldots,y_k\}$. We start by a variation of Remark \ref{DeplacementDeuxPartiesFinies}.
\begin{Claim}
 For any finite subsets $A,B$ of $X$, there exists $g\in G$ such that
 \[
  \big(gA\cup \tau^{\pm 1}(gA)\big) \cap \big(B \cup \tau^{\pm 1}(B)\big) = \emptyset \, .
 \]
 Similarly, there exists $h\in H$ such that $\big(hA\cup \tau^{\pm 1}(hA)\big) \cap \big(B \cup \tau^{\pm 1}(B)\big) = \emptyset$.
\end{Claim}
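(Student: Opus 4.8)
The plan is to absorb the fixed permutation $\tau$ into the target set and then invoke Remark~\ref{DeplacementDeuxPartiesFinies}. Since $\tau$ is a fixed permutation, for any finite set $Y\subseteq X$ the sets $\tau(Y)$ and $\tau^{-1}(Y)$ are again finite. Hence
\[
 B^{*} := \big(B\cup \tau^{\pm 1}(B)\big)\,\cup\, \tau^{\pm 1}\!\big(B\cup \tau^{\pm 1}(B)\big) \;=\; \bigcup_{j=-2}^{2}\tau^{j}(B)
\]
is a finite subset of $X$.

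First I would record the elementary equivalences $\tau(gA)\cap S=\emptyset \iff gA\cap\tau^{-1}(S)=\emptyset$ and $\tau^{-1}(gA)\cap S=\emptyset \iff gA\cap\tau(S)=\emptyset$, valid for arbitrary subsets $S$ because $\tau$ is a bijection. Combining these, the desired condition $\big(gA\cup \tau^{\pm 1}(gA)\big)\cap\big(B\cup \tau^{\pm 1}(B)\big)=\emptyset$ is seen to be equivalent to the single condition $gA\cap B^{*}=\emptyset$: the part $gA\cap\big(B\cup\tau^{\pm1}(B)\big)=\emptyset$ is exactly $gA$ avoiding the first layer, while $\tau^{\pm1}(gA)\cap\big(B\cup\tau^{\pm1}(B)\big)=\emptyset$ translates, via the equivalences above, into $gA$ avoiding $\tau^{\pm1}\big(B\cup\tau^{\pm1}(B)\big)$.

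Then I would apply Remark~\ref{DeplacementDeuxPartiesFinies} to the two finite subsets $A$ and $B^{*}$ of $X$. Under the standing hypothesis of Proposition~\ref{FreeProductInfiniteOrbit}, every $G$-orbit on $X$ is infinite; in particular the orbits of all points of $A$ and of $B^{*}$ are infinite, so there exists $g\in G$ with $gA\cap B^{*}=\emptyset$, which by the previous paragraph is precisely the first assertion of the Claim. The second assertion is proved in the same way, with $G$ replaced by $H$ and using that every $H$-orbit on $X$ is infinite.

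I do not anticipate any genuine obstacle here: the only subtlety is the bookkeeping of how $\tau^{\pm 1}$ acts on both sides of the intersection, which is exactly what forces the second layer of $\tau^{\pm 1}$'s in the definition of $B^{*}$; once that is in place, the statement is an immediate consequence of the Neumann--Neumann lemma through Remark~\ref{DeplacementDeuxPartiesFinies}.
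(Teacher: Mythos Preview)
Your proof is correct and is essentially the same argument as the paper's: both set $B' = B\cup\tau^{\pm1}(B)$, apply Remark~\ref{DeplacementDeuxPartiesFinies} to $A$ and $B'\cup\tau^{\pm1}(B')$ (your $B^{*}$), and then unwind the bijection $\tau$ to recover the stated disjointness. The only difference is cosmetic---you spell out $B^{*}=\bigcup_{j=-2}^{2}\tau^{j}(B)$ and the equivalence with $gA\cap B^{*}=\emptyset$ a bit more explicitly.
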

Indeed, set $B' = B \cup \tau^{\pm 1}(B)$. By Remark \ref{DeplacementDeuxPartiesFinies}, there exists $g\in G$ (respectively $h\in H$) such that $gA \cap B' = \emptyset$ and $gA \cap  \tau^{\pm 1}(B') = \emptyset$. This implies $gA \cap B' = \emptyset$ and $\tau^{\pm 1}(gA) \cap B' = \emptyset$, hence $\big(gA\cup \tau^{\pm 1}(gA)\big) \cap \big(B \cup \tau^{\pm 1}(B)\big) = \emptyset$. The Claim is proved.

Hence, there exists $g_1\in G$ such that $(F\cup \tau^{\pm 1} F) \cap (g_1 I\cup \tau^{\pm 1}g_1 I)=\emptyset$. Then, taking $A=J$ and $B=F\cup g_1 I$, the Claim shows that there exists $g_2\in G$ such that the sets $F\cup\tau^{\pm 1}(F)$, $g_1I\cup \tau^{\pm 1} (g_1I)$, and $g_2J\cup\tau^{\pm 1}(g_2J)$ are pairwise disjoint.
We then choose a finite subset ${M=\{z_1,\ldots,z_k\}\subset X}$ such that the set $M\cup \tau^{\pm 1} M$ is disjoint from the finite sets considered so far.
Again by the Claim (with $A=M$ and $B= F\cup g_1I \cup g_2J \cup M$), there exists $h\in H$ such that the sets
$$
 F\cup\tau^{\pm 1}(F), \,\, g_1I\cup \tau^{\pm 1} (g_1I), \,\, g_2J\cup\tau^{\pm 1}(g_2J), \,\, M\cup \tau^{\pm 1} M \text{ and } h(M\cup \tau^{\pm 1} M)
$$
are pairwise disjoint.

We then define a permutation $\sigma$ of $X$ by
$$
 \sigma(g_1x_j)= z_j, \,\, \sigma(\tau^{-1}(z_j))=\tau(g_1x_j)
$$
$$
\sigma(g_2(y_j))= h(z_j), \,\, \sigma(\tau^{-1}(h(z_j)))=\tau(g_2(y_j))
$$
for every $j=1,\dots,k$, and $\sigma(x):=\tau(x)$ for every other points of $X$. In particular, $\sigma|_F=\tau|_F$ and $(g_2^{-1}hg_1)^\sigma(x_i)=y_i$ for all $i=1,\ldots,k$. This shows that $\sigma\in U_{k,x,y}$ and the set $U_{k,x,y}$ is dense.
\end{proof}

\begin{proof}[Proof of Proposition \ref{FreeProductFaithful}.]
This follows from the genericity of $\mathcal{O}_1$ in \cite{Moon}; here we give a self-contained proof in the case of free products.

For every $w\in G*H$, let $U_w=\{\sigma\in\sym(X): w^\sigma\neq \id_X\}$.
We have
\[
 \Fc=\bigcap_{w\in G*H\setminus\{1\}} U_w \ .
\]
So it is enough to show that for every $w\in G*H\setminus\{1\}$, the set
$U_w$ is open and dense.

It is clear that $U_w$ is open. Let us show that $U_w$ is dense. If $w$ is a nontrivial element of $G$ or $H$, then $U_w=\sym(X)$ since $G$ and $H$ act faithfully on $X$. If $w\notin G\cup H$ and $w\neq gh$ (with $g\in G\setminus \{1\}$ and $h\in H\setminus\{1\}$), then we can write
\[
 w = g_kh_k \cdots g_1h_1
\]
with $k\geq 2$, $g_k\in G$, $g_{k-1},\ldots,g_1\in G\setminus\{1\}$, $h_k,\ldots,
h_2\in H\setminus\{1\}$ and $h_1\in H$.

Let $\sigma'\in\sym(X)$ and let $F$ be a finite subset of $X$. Since the elements $g_1,\ldots,g_{k-1}$, $h_2,\ldots,h_k$ have infinite supports, there exist $x_0,\ldots,x_{2k-1},y_1,\ldots,y_{2k}\in X$ such that:
\begin{enumerate}
 \item[$\bullet$] none of these points are in $F\cup\sigma'^{\pm 1}(F)$;
 \item[$\bullet$] these points are pairwise disjoint, except possibly $x_0=x_1$ and $y_{2k}=y_{2k-1}$;
 \item[$\bullet$] for every $j=0,\ldots,k-1$, we have $h_{j+1}(x_{2j})= x_{2j+1}$;
 \item[$\bullet$] for every $j=1,\ldots,k$, we have $g_j(y_{2j-1})=y_{2j}$.
\end{enumerate}

If $x_0=x_1$, put $y_0=y_1$; if not, put $y_0=x_0$. Then put $\sigma(y_i)=x_i$
for every $i=0,\ldots,2k-1$ and $\sigma(x)=\sigma'(x)$ for all $x\in F$. This defines a bijection between $F\cup\{y_0,
\ldots y_{2k-1}\}$ and $\sigma(F)\cup\{x_0,\ldots,x_{2k-1}\}$. By extending the definition of $\sigma$ to the other points, we thus obtain a permutation $\sigma\in\sym(X)$ such that $\sigma|_F=\sigma'|_F$ and $w^\sigma(y_0)=y_{2k}\neq y_0$. In case where $w=gh$ with $g\in G\setminus\{1\}$ and $h\in H\setminus\{1\}$, there exist pairwise disjoint points $y_0,x_0,x_1,y_1,y_2$ outside of $F\cup \sigma'^{\pm 1}(F)$ such that $hx_0=x_1$ and $gy_1=y_2$. Then we define a permutation $\sigma \in \sym(X)$ such that $\sigma(y_0)=x_0$, $\sigma(y_1)=x_1$ and $\sigma|_F=\sigma'|_F$ so that $w^\sigma(y_0)=y_2\neq y_0$. This proves that $\sigma\in U_w$ and therefore $U_w$ is dense in $\sym(X)$.
\end{proof}

\section{Case with one finite factor}\label{FiniteFactor}
\subsection{Definitions and notations}
Let $G$, $H$ two non-trivial finite or countable groups. In this section, the set $X$ will be identified with the disjoint union of a countable collection of copies of $G$:
\[
 X = \bigsqcup_{j\in\N} G_j, \quad \text{ where } G_j=G \text{ for every } j .
\]

First of all, we give some definitions and fix the notations.
Given an action $\sigma:H\rightarrow \sym(X)$ and $G \curvearrowright X$, this induces an action of $G\ast H$ on $X$. Denote by $X_{\sigma}$ the Schreier graph of this action with respect to the generating set $G\cup H$ and by $d_{\sigma}$ the distance on $X_{\sigma}$. Given $u\in G*H$, we denote by $u^\sigma$ the image of $u$ in the subgroup $\langle G,\sigma(H) \rangle$ of $\sym(X)$.
\begin{Def}\label{defTrajectoire}
 Let $w\in G*H$ and $x\in X$. We call \emph{$\sigma$-trajectory of $w$ from $x$} the sequence $$(x,s_1(w)^\sigma(x),\ldots, s_{|w|-1}(w)^\sigma(x), w^\sigma(x)),$$
 where $s_j(w)$ is the suffix of $w$ of length $j$ (that is, if $w=w_{|w|}w_{|w|-1}\cdots w_2 w_1$ is written as a normal form, then $s_j(w)=w_j w_{j-1}\cdots w_2 w_1$).
\end{Def}

Consider the graph where the vertices are the right cosets $Gw$ and $Hw$, with $w\in G*H$, and the edges are the elements of $G*H$, such that the edge $w$ links two vertices $Gw$ and $Hw$. Recall that \cite{SerreArbres} this is a tree, called \emph{Bass-Serre tree} of $G*H$ and denote by $T$ its geometric realization (which is a real tree). Endowed with the right invariant word metric with respect to the generating set $G\cup H$, the map of $G*H$ in $T$ which sends an element $w$ on the middle point between the vertices $Gw$ and $Hw$ is an isometric injection. From now on, we will identify $G\ast H$ with the image (cf. Figure \ref{BStree}).

\begin{figure}
\centering
\includegraphics[width=12cm]{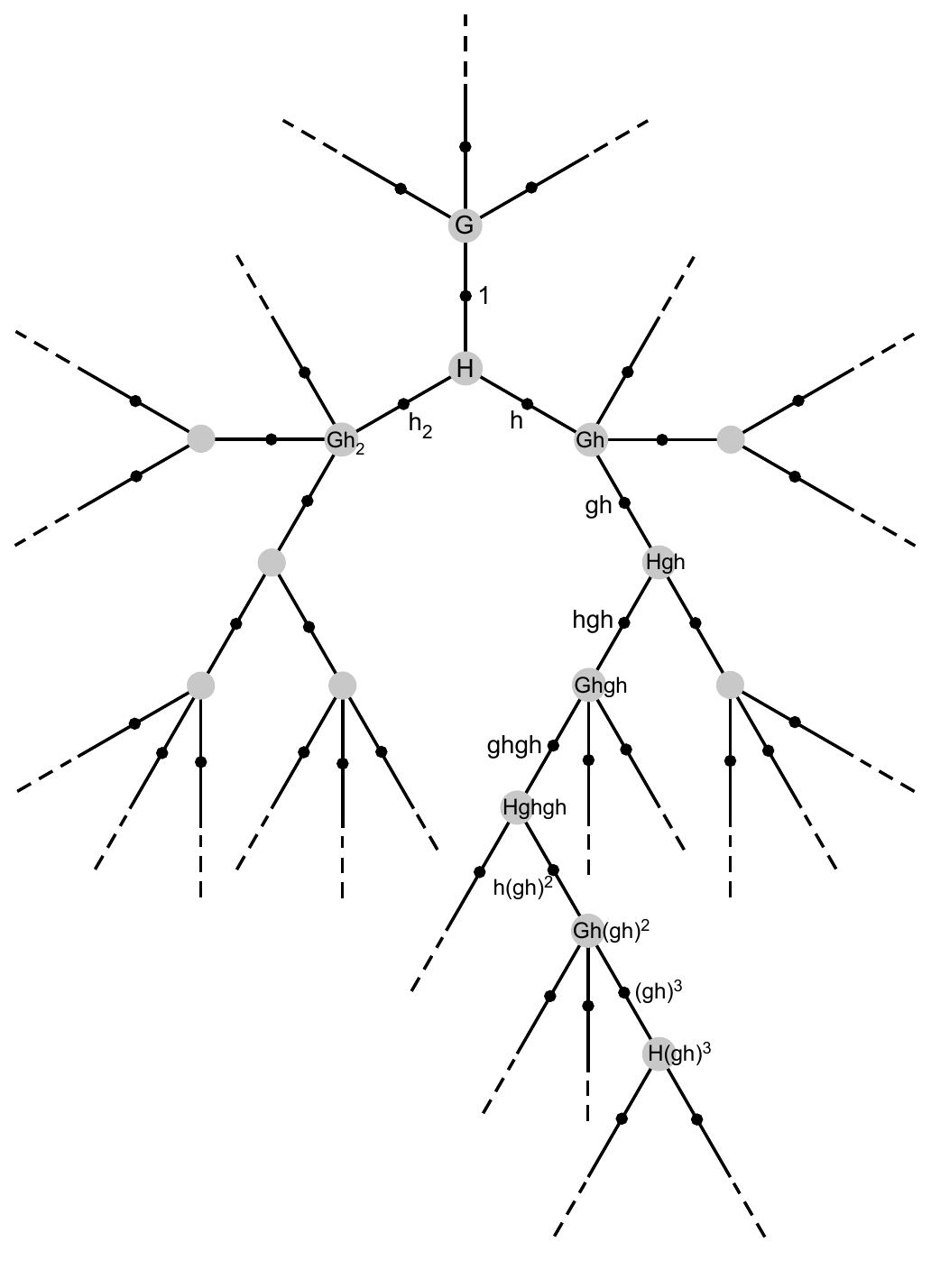}
\caption{The image of $G*H$ in the Bass-Serre tree}
\label{BStree}
\end{figure}

\begin{definition}
Let $Z$ be a real tree and $p$, $q\in Z$. We call \emph{shadow of $q$ at $p$} the set of the points $z\in Z$ such that the geodesic from $p$ to $z$ passes the point $q$ (cf. Figure  \ref{ombre}). We will denote it by $\shadow(q)_p$.
\end{definition}

\begin{figure}
\centering
\includegraphics[width=7.5cm]{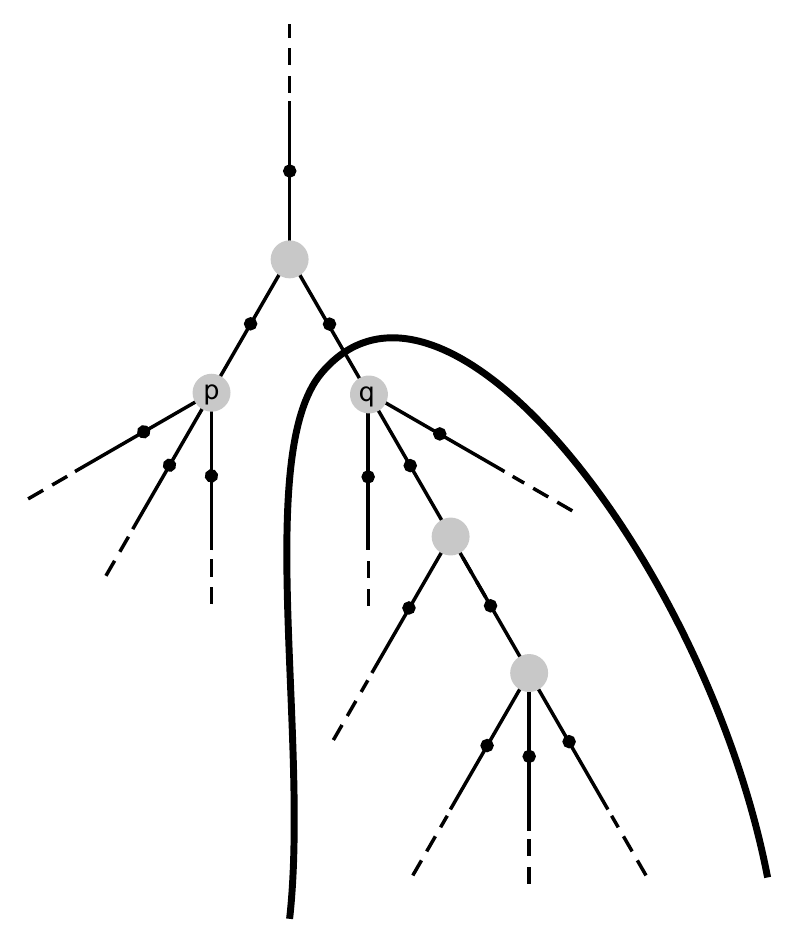}
\caption{$\shadow(q)_p$}
\label{ombre}
\end{figure}

Remark that $\shadow(q)_p$ is a subtree of $Z$ and that $q$ is the closest point to $p$ in this subtree. In addition, it is easy to see the following properties:
\begin{enumerate}
\item[$\bullet$] if $r$ is in $\shadow(q)_p$, then $\shadow(r)_p$ is contained in $\shadow(q)_p$.
\item[$\bullet$] two shadows $\shadow(q)_p$ and $\shadow(q')_p$ are either disjoint or nested.
\end{enumerate}

\bigskip

Let $T_{+}:= \shadow(H)_1$ be the shadow (of the image) of the vertex $H$ at $1$ in $T$ and let
$$
Y=T_+\cap (G\ast H).
$$
Then $Y= \bigsqcup_w Gw$ where $w$ runs in the set of non trivial elements of $G*H$ such that the normal form starts and terminates with an element of $H$. Let
$$
\Ybar = Y\cup\{1\}.
$$
Then $ \Ybar = \bigsqcup_w Hw$ where $w$ runs in the set of elements of $G*H$ such that the normal form of $w$ is either $1$, or starts with an element of $G$ and terminates with an element of $H$. Therefore, $Y$ is invariant under $G$-action (by left multiplication) and $\Ybar$ is invariant under $H$-action.

\subsection{Main result of this Section}

Let
\begin{enumerate}
\item[$\bullet$] $\Ac=\{\sigma:H\rightarrow \sym(X) \,|\, \langle G,\sigma(H)\rangle \curvearrowright X \textrm{ is admissible }\}$
\item[$\bullet$]$\Hc=\{\sigma:H\rightarrow \sym(X) \,|\, \langle G,\sigma(H)\rangle \curvearrowright X \textrm{ is highly transitive }\}$
\item[$\bullet$] $\Fc=\{\sigma:H\rightarrow \sym(X) \,|\, \langle G,\sigma(H)\rangle \curvearrowright X \textrm{ is faithful }\}$
\end{enumerate}
and recall that $\Ac$ is a Baire space (see Section \ref{PrelimBaire}).
\begin{theorem}\label{ThmFiniteFactor}
If $H$ is finite and $|H|\geq 3$, then $\Ac\cap \Hc \cap \Fc$ is generic in $\Ac$.
\end{theorem}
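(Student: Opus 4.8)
The plan is to mimic the structure of the two-infinite-factors case: express $\Ac\cap\Hc\cap\Fc$ as a countable intersection of sets that are open and dense in the Baire space $\Ac$, and invoke the Baire category theorem. Since $H$ is finite, we only need to produce dense open sets for high transitivity and for faithfulness. For high transitivity, for each $k\in\N^*$ and each pair $x=(x_1,\dots,x_k)$, $y=(y_1,\dots,y_k)\in X^{(k)}$, set
\[
U_{k,x,y}=\{\sigma\in\Ac:\exists\, u\in G*H\ \text{with}\ u^\sigma(x_i)=y_i\ \forall i\}\,,
\]
and for each $1\neq w\in G*H$ set $U_w=\{\sigma\in\Ac: w^\sigma\neq\id_X\}$. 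Then $\Ac\cap\Hc\cap\Fc=\bigcap_{k,x,y}U_{k,x,y}\cap\bigcap_{w\neq 1}U_w$, a countable intersection. Openness of $U_{k,x,y}$ is the same continuity argument as in Proposition \ref{FreeProductInfiniteOrbit} ($\sigma\mapsto u^\sigma$ is continuous, being built from $\sigma$ by composition and inversion); openness of $U_w$ is immediate. So the whole content is \textbf{density} in $\Ac$.

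The key difference from Section \ref{InfiniteFactors} is that here $G$ acts on $X\cong G\times\N$ with \emph{finite} orbits (the copies $G_j$), so we cannot freely move finite sets by elements of $G$; only $\sigma(H)$ is at our disposal to escape a given finite region, and $\sigma$ is exactly what we are trying to perturb. The approach I would take is: given $\sigma_0\in\Ac$ and a finite set $F\subset X$, I want to modify $\sigma_0$ only on a finite set disjoint from $F\cup\sigma_0^{\pm1}(F)$ (so that the new $\sigma$ stays $d$-close to $\sigma_0$ and agrees with it on $F$) so as to create a path in the new Schreier graph realizing $u^\sigma(x_i)=y_i$, respectively $w^\sigma\neq\id$. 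Because $\sigma_0$ is admissible, all $\langle G,\sigma_0(H)\rangle$-orbits are infinite; this is what lets me reach, via the Bass--Serre tree / $\sigma$-trajectory picture of Definition \ref{defTrajectoire}, fresh points of $X$ far from $F$ where I am allowed to redefine $\sigma$. The combinatorial heart is to choose an element $u$ (resp.\ $w$) of $G*H$ whose normal form alternates enough between $G$ and $H$, and then to lay down its $\sigma$-trajectory from $x_i$ to $y_i$ using only finitely many new $H$-transpositions, each placed on as-yet-unused points; here is where $|H|\geq 3$ is used — with $|H|=2$ the permutation $\sigma(h)$ for the non-trivial $h$ is forced to be an involution, too rigid to both be perturbed freely and remain a homomorphism on all of $H$, whereas for $|H|\geq 3$ there is enough room to prescribe the images of finitely many points consistently with some group structure on $H$ (one uses that $H$, being non-trivial and not $\Z/2\Z$, contains either an element of order $\geq 3$ or a Klein four subgroup, so finite partial data on $X$ extends to a genuine $H$-action). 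Keeping the modification inside $\operatorname{Hom}(H,\sym(X))$ \emph{and} inside $\Ac$ (no new finite orbit, no new bad intersection of a $G$-orbit with a $\sigma(H)$-orbit) is the extra bookkeeping that the infinite-factor proof did not need; one handles it exactly as in the proof that $\Ac$ is closed, by only ever adding edges that extend infinite orbits.

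The main obstacle I expect is the density of $U_{k,x,y}$: arranging a single element $u\in G*H$ and a single finite perturbation of $\sigma_0$ that simultaneously sends $k$ distinct points $x_i$ to $k$ distinct points $y_i$, while (a) the perturbation is supported off $F\cup\sigma_0^{\pm1}(F)$, (b) the resulting $\sigma$ is still an $H$-homomorphism, and (c) admissibility is preserved. The trick, as in Proposition \ref{FreeProductInfiniteOrbit}, is to not try to connect $x_i$ to $y_i$ directly but to route both through a common fresh block: use admissibility of $\sigma_0$ to find, inside the shadows $\shadow(\cdot)_1$ of the Bass--Serre tree, disjoint fresh copies of $G$ lying far from $F$, transport all the $x_i$ there by some $u_1^{\sigma_0}$ and all the $y_i$ there by some $u_2^{\sigma_0}$ (these use the \emph{unperturbed} $\sigma_0$, hence cost nothing), and then define the new $\sigma$ on the finitely many fresh points to glue the two blocks by a short word in $H$; then $u=u_2^{-1}h\,u_1$ works. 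Faithfulness (density of $U_w$) is easier and parallels the end of the proof of Proposition \ref{FreeProductFaithful}: pick a fresh point far from $F$, follow the $\sigma_0$-trajectory of $w$ as far as it is already defined, and perturb $\sigma$ at the first undetermined step to push the trajectory to a new point, ensuring $w^\sigma$ moves that point.
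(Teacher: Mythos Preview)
Your global strategy coincides with the paper's: express $\Ac\cap\Hc\cap\Fc$ as the countable intersection of the $U_{k,x,y}$ and $U_w$, note openness is immediate, and reduce to density. The endgame you describe --- route the $x_i$ and $y_i$ to fresh points by words $u_1,u_2$ and then insert a single new $H$-orbit so that $u=u_2^{-1}hu_1$ works --- is also exactly what the paper does. But there is a real gap in the middle.

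You propose to perturb $\sigma_0$ only on a finite set while using the \emph{unperturbed} trajectories $u_1^{\sigma_0}(x_i)$, $u_2^{\sigma_0}(y_i)$ to reach the fresh region. For $u^\sigma(x_i)=y_i$ to hold under the \emph{new} action $\sigma$, you need $u_1^\sigma=u_1^{\sigma_0}$ along those trajectories, so the perturbation must avoid the entire trajectories, not just $F\cup\sigma_0^{\pm1}(F)$. For an arbitrary admissible $\sigma_0$ the Schreier graph has no tree structure, trajectories may revisit any region, and your appeal to ``shadows $\shadow(\cdot)_1$ of the Bass--Serre tree'' is unjustified: there are no shadows in $X_{\sigma_0}$. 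The paper fixes this by a step you omit: it first replaces $\sigma_0$ by an action $\beta\in\Ac$ that agrees with $\sigma_0$ on the finite $\sigma_0(H)$-saturation $K\supseteq F$ but is the canonical Bass--Serre action on $X\setminus K\cong \Ybar\times(\Kbar\setminus K)$. Only after this straightening is the Schreier graph a forest outside $K$, so that any $\beta$-trajectory leaving $K$ is geodesic thereafter (Claim~\ref{Claim2}); one can then find $v_1,v_2$ with $K,v_1^\beta(K),v_2^\beta(K)$ disjoint (Claim~\ref{Claim3}) and separate the \emph{shadows} of the $2k$ image points (Claim~\ref{Claim4}), guaranteeing the final perturbation $\alpha$ never touches the trajectories.

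Two smaller points. The hypothesis $|H|\geq 3$ is not used where you say (prescribing a finite $H$-orbit by bijection with $H$ works for $|H|=2$ as well); it is used in Claim~\ref{Claim4}, where one needs two distinct non-trivial $h,h'\in H$ to push nested shadows apart. And faithfulness requires no further perturbation: once $\beta$ is built, the $G*H$-action via $\beta$ is already faithful (it is free on each $\Ybar_z$), so every $U_w$ contains $\beta$ and density is immediate.
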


For $w\in G*H$, $k\in\N^*$ and $\bar x,\bar y\in X^{(k)}$, where $\bar x = (x_1,\ldots,x_k)$ and $\bar y = (y_1,\ldots, y_k)$, we put
 \begin{eqnarray*}
   \Uc_{k,\bar x,\bar y} &=& \{\sigma\in\Ac: \exists \tau\in\langle G,\sigma(H) \rangle \text{ such that } \tau(x_j) = y_j \ \forall j=1,\ldots,k\} \\
   \Uc'_w &=& \{\sigma\in\Ac: w^\sigma \neq 1 \text{ in } \sym(X)\}
 \end{eqnarray*}
Then we have
$$
  \Ac\cap\Hc = \bigcap_{k\in\N^*} \bigcap_{\bar x,\bar y\in X^{(k)}} \Uc_{k,\bar x,\bar y}
$$
and
$$
\Ac\cap\Hc\cap\Fc = \left(\bigcap_{k\in\N^*} \bigcap_{\bar x,\bar y\in X^{(k)}} \Uc_{k,\bar x,\bar y}\right) \cap \left(\bigcap_{w\in (G*H)\setminus\{1\}} \Uc'_{w}\right) \, .
$$
So it is enough to prove that the sets $\Uc_{k,\bar x,\bar y}$ and $\Uc'_w$ are open and dense in $\Ac$.

Since $\Uc_{k,\bar x,\bar y}=\cup_{w\in G*H} \mathcal{O}_{k,\bar x,\bar y,w}$ where $\mathcal{O}_{k,\bar x,\bar y,w} =  \{\sigma\in\Uc: w^\sigma(x_j) = y_j \ \forall j=1,\ldots,k\}$ which is open, the set $\Uc_{k,\bar x,\bar y}$ is open. Furthermore the set $\Uc'_{w}$ is clearly open.

We shall now prove that $\Uc_{k,\bar x,\bar y}$ and $\Uc'_{w}$ are dense in $\Ac$. We fixe from now on $k\in\N^*$, $\bar x,\bar y\in X^{(k)}$ and $F$ a finite subset of $X$. Let $\sigma\in\Ac$. To see that the set $\Uc_{k,\bar x,\bar y}$ is dense, we need to show that there exists $\alpha\in \Uc_{k,\bar x,\bar y}$ such that $\alpha|_F=\sigma|_F$. By taking a bigger finite set containing $F$ if necessary, we can suppose that $x_1,\ldots,x_k,y_1,\ldots,y_k$ are contained in $F$. Let
$$
K=\bigcup_{z\in F}\sigma(H)\cdot z.
$$
Since $F$ and $H$ are finite, $K$ is also finite. Additionally let
$$
\Kbar=\bigcup_{z\in K}G\cdot z.
$$
Notice that $\Kbar$ is infinite if $G$ is infinite, but it has finitely many $G$-orbits. Remark that $\Kbar\setminus K$ is not empty since otherwise $K$ would be formed with finite $\langle G, \sigma(H) \rangle$-orbits which contradicts to the assumption that $\sigma$ is in $\Ac$.

Recall that $T_+$ is the shadow of $H$ at $1$ in $T$ and $Y=T_+\cap(G*H)$. Since $X\setminus \Kbar$ is formed by infinitely many $G$-orbits (i.e. infinitely many copies $G_j$), there exists a $G$-equivariant bijection between $Y\times(\Kbar\setminus K)$, where $G$ acts trivially on the second factor, and $X\setminus \Kbar$. We can then extend this to a bijection $\phi$ between $\Ybar\times (\Kbar\setminus K)$ and $X\setminus K$ by sending $(1,z)$ on $z$ for every $z\in\Kbar\setminus K$. Henceforth, we denote by $Y_z$ (resp. $\Ybar_z$), the image of $Y\times \{z\}$ (resp. $\Ybar\times \{z\}$) in $X\setminus K$.

Since $K$ is $\sigma(H)$-invariant, we can define an action $\beta:H\to \sym(X)$ as follows: (cf. Figure \ref{ActionBeta}):
\begin{enumerate}
 \item[$\bullet$] $\beta|_K = \sigma|_K$;
 \item[$\bullet$] for every $z\in\Kbar\setminus K$, the restriction of $\beta$ to $\Ybar_z$ corresponds to the action of $H$ on $\Ybar\times \{z\}$ by left multiplication on the first factor.
\end{enumerate}
\begin{figure}
\centering
\includegraphics[width=11cm]{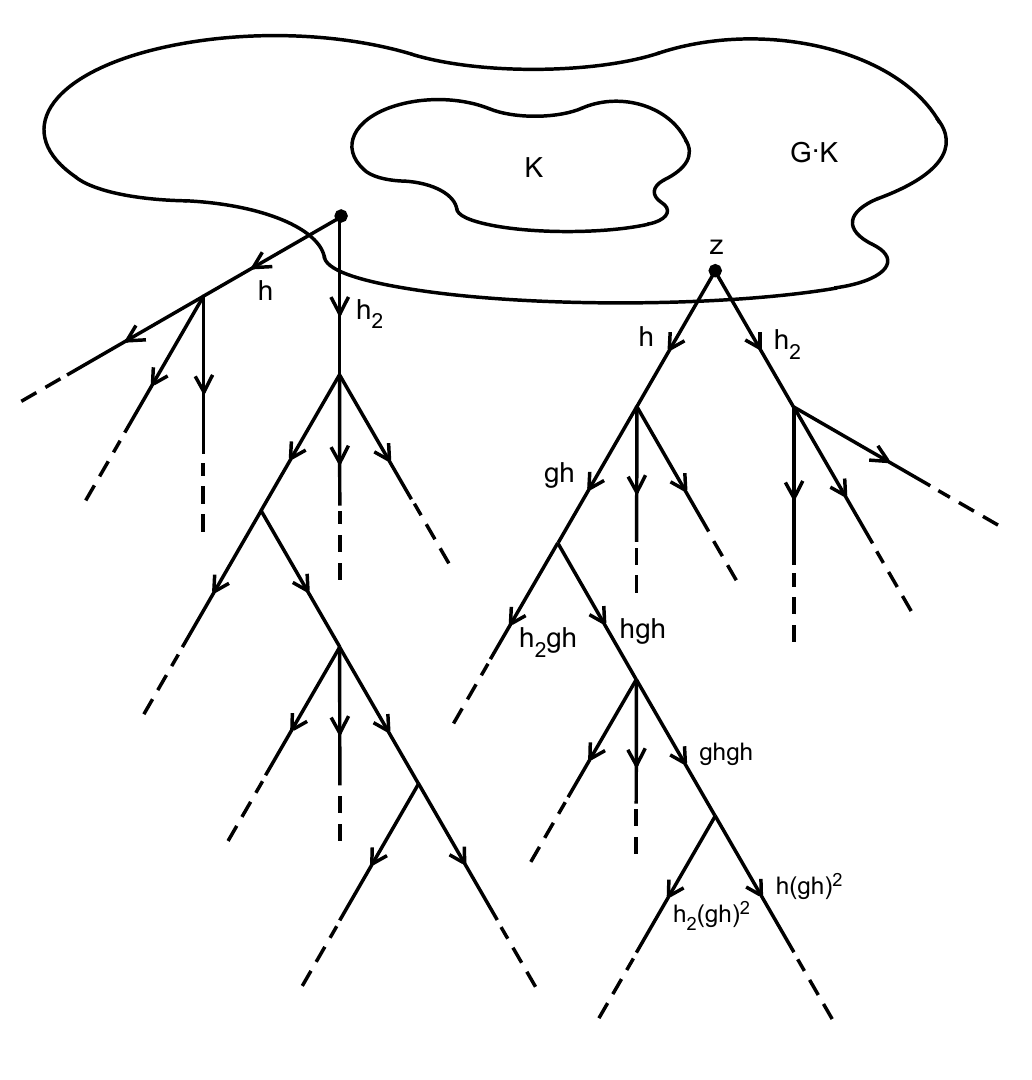}
\caption{Schreier graph of the action associated to $\beta$}
\label{ActionBeta}
\end{figure}

\begin{Claim}\label{Claim1}
The action $\beta$ is in $\Ac$.
\end{Claim}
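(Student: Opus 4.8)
The plan is to verify the two defining conditions of admissibility for the action $\beta$: namely that $\beta$ is a genuine homomorphism $H \to \sym(X)$ (which is essentially built into the construction, since $\beta$ restricts to an $H$-action on each piece $K$ and $\Ybar_z$, and these pieces partition $X$), and, more importantly, that every orbit of the subgroup $\langle G, \beta(H)\rangle$ on $X$ is infinite. Since the metric on $\Ac$ is complete and $\Ac$ is closed in $\operatorname{Hom}(H,\sym(X))$, it will be enough to check these two properties directly.

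**First I would** record that $\beta$ is well-defined as an element of $\operatorname{Hom}(H,\sym(X))$: the set $X$ is the disjoint union of $K$ and the sets $\Ybar_z$ for $z$ ranging over $\Kbar\setminus K$ (this uses that $\phi$ is a bijection between $\Ybar\times(\Kbar\setminus K)$ and $X\setminus K$, together with $K$ being $\sigma(H)$-invariant so that $\sigma|_K$ is a legitimate $H$-action on $K$). On each block $\beta$ is defined to be an $H$-action, and $H$ permutes each block to itself, so $\beta$ is indeed a homomorphism into $\sym(X)$.

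**The main point** is the infiniteness of all $\langle G, \beta(H)\rangle$-orbits, and here I expect the crux of the argument. I would argue as follows. Take any $x \in X$. If $x \in \Kbar$, then already $\Kbar \setminus K$ is non-empty, and for $z \in \Kbar\setminus K$ the block $\Ybar_z$ is an infinite set (since $\Ybar$ is infinite) contained in a single $\langle G,\beta(H)\rangle$-orbit — indeed $\beta$ acts on $\Ybar_z$ as $H$ acts on $\Ybar$ by left multiplication, and $G$ acts on $Y_z \subset \Ybar_z$ as $G$ acts on $Y$, so the subgroup $\langle G, \beta(H)\rangle$ moves the base point of $\Ybar_z$ all over $\Ybar_z$ (this is just the statement that $G*H$ acts transitively on $\Ybar$, which is the union of cosets $Hw$). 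So it suffices to show that the orbit of any $x \in \Kbar$ meets some $\Ybar_z$. If $x \in \Kbar \setminus K$ this is immediate ($x = \phi(1,z) \in \Ybar_z$). If $x \in K$: since $\beta|_K = \sigma|_K$ and $\sigma \in \Ac$, the $\langle G,\sigma(H)\rangle$-orbit of $x$ is infinite, hence cannot be contained in the finite set $K$; tracking a word $u \in G*H$ with $u^\sigma(x) \notin K$ and taking the \emph{first} letter of the corresponding $\sigma$-trajectory (Definition \ref{defTrajectoire}) that leaves $K$, one sees that some point of the $\langle G,\beta(H)\rangle$-orbit of $x$ lands outside $K$ — one needs here that as long as the trajectory stays in $K$, $\beta$ and $\sigma$ agree, so the trajectory is the same, and the first exit point is reached by applying a generator in $G \cup \sigma(H)$ to a point of $K$; by the construction of $\beta$ on $\Kbar \setminus K$ and the $G$-equivariance of $\phi$, that exit point lies in some $\Ybar_z$. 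For $x \in X \setminus \Kbar$, $x$ lies in some $\Ybar_z$ by definition of $\phi$, so its orbit is infinite by the argument above.

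**The hard part** is the bookkeeping in the last paragraph: making precise that a $\sigma$-trajectory starting in $K$ either stays in $K$ forever (impossible, as the $\sigma$-orbit is infinite) or has a well-defined first exit point which, by the compatibility $\beta|_K=\sigma|_K$ and the structure of $\beta$ on $\Kbar\setminus K$ (together with the fact that $\Kbar$ is the $G$-saturation of $K$, so a single application of a $G$-element to a point of $K$ lands in $\Kbar$, and a single application of a $\sigma(H)$-element stays in $K$), necessarily lands in some $\Ybar_z$ with $z\in\Kbar\setminus K$. Once there, infiniteness is automatic. The second admissibility condition — that each $G$-orbit meets each $\sigma(H)$-orbit in at most one point — I would handle in the same vein: it holds on $X\setminus K$ because there it is modeled on the action of $G*H$ on copies of $\Ybar$ (where a $G$-coset $Gw$ and an $H$-coset $Hw'$ meet in at most the single element $w$ when they meet at all), and it holds involving $K$ because $\beta=\sigma$ there and $\sigma\in\Ac$, with the gluing along $\partial K$ being consistent by construction. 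This completes the verification that $\beta \in \Ac$.
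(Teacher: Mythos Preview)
Your proof that every $\langle G,\beta(H)\rangle$-orbit is infinite is correct and is exactly the paper's approach, spelled out in greater detail: the paper's own proof is a single sentence observing that points in each $\Ybar_z$ have infinite orbit by construction, while points in $K$ do because the $\langle G,\sigma(H)\rangle$-orbits are infinite (your trajectory-exit argument is the right way to make that second clause precise).

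One correction: in this paper, admissibility is \emph{defined} solely by the condition that all $\langle G,\sigma(H)\rangle$-orbits be infinite; there is no separate ``second admissibility condition'' requiring that $G$-orbits meet $\sigma(H)$-orbits in at most one point. Your final paragraph is therefore unnecessary. (You may have been misled by the ``Secondly'' clause in the proof that $\Ac(G,H,X)$ is closed, but that clause is not part of the definition and is not used anywhere else; in particular the paper's proof of the present Claim checks nothing beyond infiniteness of orbits.) Note also that your justification there---appealing to $\sigma\in\Ac$ to obtain the intersection property for $\beta$ on $K$---would only work if that property were part of the definition of $\Ac$, which it is not.
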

\begin{proof}
The $\langle G, \beta(H) \rangle$-orbits are infinite since for the points in $\Ybar_z$, it follows from the construction, and for the points in $K$, it is because the $\langle G, \sigma(H) \rangle$-orbits are infinite and thus $\beta\in \Ac$.. 
\end{proof}
The action of $\beta$ induces an action of $G*H$ on $X$. Denote
by $X_\beta$ the Schreier graph of this action with respect to the generating set $G\cup H$ and by $d_{\beta}$ the distance on $X_{\beta}$. Remark that
\begin{enumerate}
 \item[$\bullet$] for every $z\neq z'$ in $\Kbar\setminus K$, there is no edge of $X_\beta$ that links an element of $Y_z$ and an element of $Y_{z'}$;
 \item[$\bullet$] the edges of $X_\beta$ that link $\Kbar$ to a subset $Y_z$ are labeled by elements of $H$, and they link  $z=\phi(1,z)$ to an element of the form $\phi(h,z)$ with $h\in H\setminus\{1\}$;
  \item[$\bullet$] the restriction of the distance $d_\beta$ to $\Ybar_z$ corresponds via $\phi^{-1}$ to the right invariant word metric on $\Ybar$.
\end{enumerate}

Since $\Ybar$ embeds isometrically in the real tree $T$, each $\Ybar_z$ can be embedded isometrically into a real tree $T_z$, and we can moreover require that no subtree of $T_z$  contains the image of $\Ybar_z$. This real tree $T_z$ is essentially unique (cf. for example Lemma~2.13 in \cite{Bestvina}). Notice that $G$ and $H$ do not act on the union of $X$ and the trees $T_z$.

\begin{Claim}\label{Claim2}
Let $w\in G*H$ and $x\in K$. Suppose that the $\beta$-trajectory of $w$ from $x$ is not contained in $K$ and let $z=s_j(w)^\beta(x)$ be the first point of this trajectory that is outside of $K$. Then $z$ is contained in $\overline{K}\setminus K$ and the end of this trajectory is a geodesic sequence in $\Ybar_z$.
Therefore, we have
$$
d(z,s_n(w)^{\beta}(x))< d(z,s_m(w)^{\beta}(x)),
$$
for every $j\leq n< m\leq |w|$.
\end{Claim}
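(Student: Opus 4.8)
The plan is to follow the $\beta$-trajectory of $w$ from $x$ edge by edge in the Schreier graph $X_\beta$, using the explicit descriptions of $\beta$ and of the bijection $\phi$. Write $w=w_{|w|}w_{|w|-1}\cdots w_1$ in normal form, and set $p_0=x$ and $p_i=s_i(w)^\beta(x)$ for $1\le i\le|w|$, so that $p_i=w_i^\beta(p_{i-1})$; thus $p_{i-1}$ and $p_i$ are joined in $X_\beta$ by an edge labelled $w_i$, which moves points by the $G$-action if $w_i\in G$ and by $\beta(w_i)$ if $w_i\in H$. Since $p_0=x\in K$ while the trajectory is not contained in $K$, there is a least index $j\ge1$ with $p_j\notin K$; write $z:=p_j$.

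First I would locate $z$ and identify the exit step. If $w_j\in H$, then from $p_{j-1}\in K$, $\beta|_K=\sigma|_K$ and the fact that $K$ is $\sigma(H)$-invariant we would get $p_j=\sigma(w_j)(p_{j-1})\in K$, a contradiction; hence $w_j\in G$. Then $p_j=w_j\cdot p_{j-1}$ lies in $\Kbar$, since $\Kbar$ is $G$-invariant and $p_{j-1}\in K\subseteq\Kbar$; as $p_j\notin K$ this shows $z\in\Kbar\setminus K$, and then $z=\phi(1,z)$ by the definition of $\phi$.

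The heart of the argument is an induction on $i$, for $j\le i\le|w|$, showing that $p_i=\phi(v_i,z)$ where $v_i:=w_iw_{i-1}\cdots w_{j+1}$ (the empty product $1$ when $i=j$). The base case is $p_j=\phi(1,z)=z$. For the inductive step, note that as $w$ is in normal form its syllables alternate in type, so for $i>j$ the word $v_i=w_i\cdots w_{j+1}$ is reduced of syllable length $i-j$ and its rightmost syllable is $w_{j+1}\in H$; hence $v_i\in Y$ and $p_i=\phi(v_i,z)\in Y_z\subseteq X\setminus\Kbar$. The move from $p_i$ to $p_{i+1}$ is labelled $w_{i+1}$: if $w_{i+1}\in G$, one invokes the $G$-equivariance of $\phi$ on $Y\times(\Kbar\setminus K)$ to get $p_{i+1}=w_{i+1}\cdot\phi(v_i,z)=\phi(w_{i+1}v_i,z)=\phi(v_{i+1},z)$; if $w_{i+1}\in H$, then since $p_i\in\Ybar_z$ (which contains both $Y_z$ and $z$) and $\Ybar$ is $H$-invariant, one uses that $\beta$ restricted to $\Ybar_z$ is left multiplication by $H$ on the $\Ybar$-factor to get $p_{i+1}=\beta(w_{i+1})(\phi(v_i,z))=\phi(w_{i+1}v_i,z)=\phi(v_{i+1},z)$. (When $i=j$ the first move is necessarily an $H$-move, since $w_j\in G$; so the $G$-equivariance is never invoked at the point $z$ itself, where it would not be available.)

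It then follows at once that the tail $(p_j,p_{j+1},\dots,p_{|w|})$ lies entirely in $\Ybar_z$ and, under $\phi^{-1}$, is the sequence $(1,w_{j+1},w_{j+2}w_{j+1},\dots,w_{|w|}\cdots w_{j+1})$ in $\Ybar$. For $j\le n\le m\le|w|$ the element $v_mv_n^{-1}=w_m\cdots w_{n+1}$ is a reduced word of syllable length $m-n$, so $v_n$ and $v_m$ lie at word-metric distance $m-n$ in $\Ybar$; since the restriction of $d$ to $\Ybar_z$ corresponds to that metric under $\phi^{-1}$, we obtain $d(p_n,p_m)=m-n$ for all such $n,m$. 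This exhibits $(p_j,\dots,p_{|w|})$ as a geodesic sequence in $\Ybar_z$, and since $z=p_j$ it yields immediately the claimed inequality $d(z,s_n(w)^\beta(x))=n-j<m-j=d(z,s_m(w)^\beta(x))$ for $j\le n<m\le|w|$. I expect the only real obstacle to be the bookkeeping in the induction — tracking whether a given point lies in $Y_z$ or merely in $\Ybar_z$ so that the appropriate equivariance of $\phi$ or $\beta$ can be applied — together with the initial observation that the trajectory can leave $K$ only along a $G$-edge, because $K$ is stable under $\sigma(H)$; once that is settled, the tree structure of $\Ybar_z$ supplies everything else.
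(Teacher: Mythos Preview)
Your proof is correct and follows essentially the same route as the paper's: both arguments observe that the exit from $K$ must occur along a $G$-edge because $K$ is $\beta(H)$-invariant, and then show inductively that $s_\ell(w)^\beta(x)=\phi(w_\ell\cdots w_{j+1},z)$ for $\ell\ge j$, which immediately gives the geodesic conclusion via the identification of $d_\beta|_{\Ybar_z}$ with the right-invariant word metric on $\Ybar$. The paper states the inductive formula in one line without carrying out the case analysis, whereas you spell out the $G$- versus $H$-step bookkeeping explicitly; the substance is identical.
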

\begin{proof}
Let us write $w=a_{|w|}\cdots a_1$ as the normal form. By hypothesis, we have
$$
y:=(a_{j-1}\cdots a_1)^\beta(x)\in K \text{ and } z=a_j^\beta(y)\notin K.
$$
Since $K$ is $\beta(H)$-invariant, $a_j$ is in $G$, $a_{j+1}$ is in $H$ and $a_{j+2},\ldots,a_{|w|}$ are alternatively in $G$ and $H$. The end of the $\beta$-trajectory of the word $a_{|w|}\cdots a_{j+1}$ from $z$ satisfies
\[(a_{\ell}\cdots a_{j+1})^\beta(z)=\phi(a_{\ell}\cdots a_{j+1},z)
\]
 for every  $\ell=j+1,\ldots,|w|$. Thus this trajectory is a geodesic sequence in $\Ybar_z$ and this proves the claim.
\end{proof}

\begin{Claim}\label{Claim3}
There exist $v_1$, $v_2\in G\ast H$ such that
\begin{enumerate}
\item their normal forms start with an element of $G$;
\item the sets $K$, $v_1^\beta(K)$ and $v_2^\beta(K)$ are pairwise disjoint.
\end{enumerate}
\end{Claim}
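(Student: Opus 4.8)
The plan is to obtain $v_1,v_2$ in two stages: first use Neumann's Lemma to find elements of $\langle G,\beta(H)\rangle$ that push $K$ off itself (and off each other), then \emph{repair} the corresponding words so that their normal forms start with a letter of $G$, exploiting that $K$ is $\beta(H)$-invariant (since $\beta|_K=\sigma|_K$ and $K$ is $\sigma(H)$-invariant). Throughout, Claim \ref{Claim1} ensures every $\langle G,\beta(H)\rangle$-orbit on $X$ is infinite, so Remark \ref{DeplacementDeuxPartiesFinies} applies to this action and to arbitrary finite subsets of $X$.

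First I would record the repair step. Suppose $w\in G*H$ satisfies $w^\beta(K)\cap K=\emptyset$. Since $K\ne\emptyset$ and $\beta(h)(K)=K$ for all $h\in H$, the permutation $w^\beta$ is neither the identity nor any $\beta(h)$; hence the normal form of $w$ is nonempty and is not a single letter of $H$. If it already starts with a letter of $G$, keep $w$. Otherwise its leftmost letter is some $h\in H\setminus\{1\}$ and, writing $w=h\hat w$ in normal form, $\hat w$ is a non-trivial reduced word whose leftmost letter lies in $G$ by alternation of factors; here $\hat w^\beta=\beta(h)^{-1}\circ w^\beta$, so $\hat w^\beta(K)=\beta(h^{-1})\big(w^\beta(K)\big)$. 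In both cases we get $\hat w$ (with $\hat w=w$ and $h=1$ in the first case) whose normal form starts with $G$ and satisfies $\hat w^\beta(K)=\beta(h^{-1})\big(w^\beta(K)\big)$; moreover $\hat w^\beta(K)\cap K=\emptyset$, because $x\in K\cap\hat w^\beta(K)$ would give $\beta(h)(x)\in \beta(h)(K)\cap w^\beta(K)=K\cap w^\beta(K)=\emptyset$.

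Then I would run this twice. By Remark \ref{DeplacementDeuxPartiesFinies} with $F_1=F_2=K$, choose $u_1\in\langle G,\beta(H)\rangle$ with $u_1(K)\cap K=\emptyset$, write $u_1=w_1^\beta$, and let $v_1$ be its repair: the normal form of $v_1$ starts with $G$, $v_1^\beta(K)\cap K=\emptyset$, and $v_1^\beta(K)=\beta(h_1^{-1})(u_1(K))$ for some $h_1\in H$. Next set $M_1=\bigcup_{h\in H}\beta(h)\big(v_1^\beta(K)\big)$, a finite $\beta(H)$-invariant set containing $v_1^\beta(K)$, and apply Remark \ref{DeplacementDeuxPartiesFinies} with $F_1=K$, $F_2=K\cup M_1$ to get $u_2\in\langle G,\beta(H)\rangle$ with $u_2(K)\cap(K\cup M_1)=\emptyset$; let $v_2$ be the repair of $w_2$, where $u_2=w_2^\beta$, so that the normal form of $v_2$ starts with $G$, $v_2^\beta(K)=\beta(h_2^{-1})(u_2(K))$ for some $h_2\in H$, and $v_2^\beta(K)\cap K=\emptyset$ (since $u_2(K)\cap K=\emptyset$). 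Finally, if $x\in v_1^\beta(K)\cap v_2^\beta(K)$, then $\beta(h_2)(x)\in u_2(K)$ while $\beta(h_2)(x)\in\beta(h_2)\big(v_1^\beta(K)\big)\subseteq M_1$, contradicting $u_2(K)\cap M_1=\emptyset$. Hence $K$, $v_1^\beta(K)$, $v_2^\beta(K)$ are pairwise disjoint, which is the claim.

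The one point needing care is the interaction between the repair and the disjointness requirement: stripping the leftmost $H$-letter of $w_i$ multiplies $w_i^\beta$ on the left by $\beta(h_i)^{-1}$, so before the second application of Remark \ref{DeplacementDeuxPartiesFinies} one must replace the forbidden set $K\cup v_1^\beta(K)$ by its $\beta(H)$-saturation $M_1$; no such enlargement is needed for $K$ itself, precisely because $K$ is $\beta(H)$-invariant. Everything else is a direct invocation of the Neumann Lemma. (Alternatively one could route each point of $K$ out of $K$ through the subtrees $\Ybar_z$ via Claim \ref{Claim2}, pushing the images arbitrarily deep to separate them, but the argument above avoids that machinery.)
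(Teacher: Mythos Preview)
Your argument is correct. It differs from the paper's proof in two places, and the differences are worth noting.

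For the ``repair'' step, the paper does not strip the leading $H$-letter; instead, when the normal form of $u_1$ begins in $H$ it left-multiplies by a fixed $g\in G\setminus\{1\}$ and then invokes Claim~\ref{Claim2}: the $\beta$-trajectory of $gu_1$ from any $x\in K$ passes through $u_1^\beta(x)\notin K$, after which it is geodesic in some $\Ybar_z$, so it never returns to $K$. Your stripping trick replaces this geometric argument by the purely set-theoretic observation that $K$ is $\beta(H)$-invariant, which is why you do not need Claim~\ref{Claim2} at all.

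For $v_2$, the paper again uses the tree geometry: it sets $d=\diam(K\cup v_1^\beta(K))$ and takes $v_2=(gh)^{2d}v_1$; Claim~\ref{Claim2} then forces $d_\beta(v_2^\beta(K),K)\ge 2d$, giving the disjointness of all three sets at once. You instead apply Remark~\ref{DeplacementDeuxPartiesFinies} a second time against the enlarged target $K\cup M_1$, where $M_1$ is the $\beta(H)$-saturation of $v_1^\beta(K)$; the saturation is precisely what survives the repair of $w_2$, and here the finiteness of $H$ (assumed throughout Section~\ref{FiniteFactor}) is what keeps $M_1$ finite so that Neumann's lemma applies.

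In short, the paper's proof is geometric and feeds into the same circle of ideas (trajectories becoming geodesics in the $\Ybar_z$'s) used in Claims~\ref{Claim2} and~\ref{Claim4}, whereas your proof is more self-contained and shows that Claim~\ref{Claim3} really only needs Neumann's lemma and the $\beta(H)$-invariance of $K$. Your final parenthetical remark accurately identifies this trade-off.
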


\begin{proof}
Since $\beta$ is in $\Ac$, by Lemma \ref{DeplacementPartiesFinies}, there exists $u_1\in G*H$ such that $u_1^\beta(K)\cap K=\emptyset$. Let $g\in G\setminus\{1\}$ and $h\in H\setminus\{1\}$. If the normal form of $u_1$ starts with an element of $G$, we put $v_1 := u_1$; otherwise, we put $v_1 := gu_1$. In both cases, the normal form of $v_1$ starts with an element of $G$. In addition, for every $x\in K$, the $\beta$-trajectory of $v_1$ from $x$ passes the point $u_1^\beta(x)$, which is not in $K$. Thus by Claim \ref{Claim2}, we have $v_1^\beta(K)\cap K= \emptyset$. Let
$$
d:=\diam(K \cup v_1^\beta(K)) \text{ and } v_2 := (gh)^{2d}v_1.
$$
The normal form of $v_2$ starts with an element of $G$. Furthermore, for every $x\in K$, the $\beta$-trajectory of $v_2$ from $x$ passes the point $v_1^\beta(x)$, which is not in $K$. So by Claim \ref{Claim2}, we have
$$
d(v_2(K), K)\geq 2d,
$$
thus the sets $K$, $v_1^\beta(K)$ and $v_2(K)$ are pairwise disjoint. This concludes the claim.
\end{proof}

Given a point $x\in X\setminus K$, there exists a unique point $z=z_x\in \Kbar\setminus K$ such that $x$ is in $\Ybar_{z}$. For the rest of the proof, we denote by $\shadow(x):=\shadow(x)_{z}$ the shadow of $x$ at $z$ in $T_z$.

\begin{Claim}\label{Claim4}
Let $M$ be a finite subset of $X\setminus K$ such that every element $y\in M$ can be written as $y=v_y^\beta(x_y)$, where $x_y\in K$ and the normal form of $v_y\in G\ast H$ starts with an element of $G\setminus\{1\}$. Then there exists $w\in G \ast H$ such that
\begin{enumerate}
\item[$\bullet$] the normal form of $w$ starts with an element of $G$ and terminates with an element of $H$;
\item[$\bullet$] $w^\beta(M)\cap K=\emptyset$;
\item[$\bullet$] $\shadow(p)\cap \shadow(p')=\emptyset$, for every $p\neq p'$ in $w^\beta(M)$.
\end{enumerate}
\end{Claim}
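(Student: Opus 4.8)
The plan is to build $w$ as a word of the form $w = u \cdot v$ where $v$ is a short word in $G*H$ that, applied to each $y \in M$, pushes $y$ out of $K$ in a controlled direction (so that the resulting points have pairwise disjoint shadows), and $u$ is a long "trailing tail" word (a power of some $gh$ with $g\in G\setminus\{1\}$, $h\in H\setminus\{1\}$) that guarantees the final images avoid $K$ and that the normal form has the prescribed first and last letters. Concretely, I would first use Claim~3 to get the elements $v_1, v_2$ (and iterate the construction there to produce as many words $v_1, \ldots, v_N$ as we have elements of $M$, with normal forms starting in $G$ and with $v_i^\beta(K)$ pairwise disjoint and $d(v_i^\beta(K), v_{i'}^\beta(K))$ as large as we like); then for each $y \in M$ we choose a distinct index and prepend the corresponding $v_i$ to $v_y$, so that the points $v_i^\beta(y) = (v_i v_y)^\beta(x_y)$ land in pairwise distant translates of $K$ (or rather their shadows do).

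The key steps, in order, are: (i) Record that each $y = v_y^\beta(x_y)$ with $v_y$ starting in $G\setminus\{1\}$, so the $\beta$-trajectory of $v_y$ from $x_y$ leaves $K$ immediately and, by Claim~2, never returns; in particular $y$ lies in some $\Ybar_{z_y}$ and the trajectory from the exit point is geodesic. (ii) For distinct $y, y' \in M$, arrange (by choosing distinct prefixes $v_i$ from an enlarged version of Claim~3, with pairwise $\beta(K)$-images at distance $> 2\,\diam(K \cup \bigcup_{y} \text{(relevant finite data)})$) that the geodesics in the Bass–Serre picture from a common base point to $w^\beta(y)$ and to $w^\beta(y')$ diverge before reaching either point; this forces $\shadow(w^\beta(y)) \cap \shadow(w^\beta(y')) = \emptyset$, using the stated property that two shadows are disjoint or nested together with the fact that neither point lies on the geodesic to the other. (iii) Append a long tail $u = (gh)^{2d}$ with $d = \diam\big(K \cup w_0^\beta(M)\big)$ (where $w_0$ is the word built in steps (i)–(ii)), exactly as in the last part of Claim~3, so that $w = u w_0$ satisfies $w^\beta(M) \cap K = \emptyset$ by Claim~2, and so that the normal form of $w$ starts with $g \in G$; finally, if the normal form of $w$ terminates with an element of $G$ rather than $H$, post-multiply by a suitable $h \in H\setminus\{1\}$ on the right (adjusting $x_y$ does not matter since $w^\beta$ is what we track), which does not disturb the disjointness of shadows because right multiplication by a single letter of $H$ only moves each point within its own $\Ybar_{z}$ by one edge and preserves the "disjoint or nested" dichotomy — one checks it cannot create a nesting.

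I expect the main obstacle to be step (ii): making the shadows genuinely disjoint rather than merely the points distinct. The subtlety is that the trees $T_z$ are separate real trees glued to $X$ only along $K$, so "shadow" is a local notion inside one $T_z$; to compare $\shadow(w^\beta(y))$ and $\shadow(w^\beta(y'))$ one must either show the two points land in $\Ybar_z$ for \emph{different} $z$ (in which case the shadows live in different trees and disjointness is automatic), or, if they land in the same $T_z$, show that the geodesic in $T_z$ from $z$ to one point does not pass through the other. The clean way to force the former is to use that $X\setminus\Kbar$ consists of infinitely many copies $G_j$, hence infinitely many distinct $z$'s are available, and to choose the prefixes $v_i$ so that the distinct elements of $M$ are steered into distinct copies; then each $w^\beta(y)$ lies in a distinct $\Ybar_z$ and the shadows are trivially disjoint. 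Making this steering compatible with the simultaneous requirements "normal form starts in $G$, ends in $H$" and "$w$ is a single word independent of $y$" is where the bookkeeping with Claim~2 and the tail word $u$ has to be done carefully, but no new idea beyond Claims~2 and~3 is needed.
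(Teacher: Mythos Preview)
Your proposal has a genuine gap at exactly the point you flag as ``bookkeeping.'' The word $w$ must be a \emph{single} element of $G*H$ applied uniformly to every $y\in M$; you cannot assign different prefixes $v_i$ to different elements of $M$. Your fallback, steering distinct elements into distinct trees $\Ybar_z$, cannot work either: if $y$ and $y'$ already lie in the same $\Ybar_z$ (which certainly occurs, e.g.\ when $v_y$ and $v_{y'}$ share the same first $G$-letter applied to the same $x\in K$), then by the very structure of $\beta$ the $\beta$-trajectory of any word from $y$ or $y'$ stays inside that same $\Ybar_z$ (Claim~2 forces it to be geodesic there and never return to $K$). So a single $w$ cannot separate them into different trees, and you are stuck with two points in the same $T_z$ whose shadows may well be nested. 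Pushing both far away with a tail $(gh)^{2d}$ does not help: if $\shadow(y')\subset\shadow(y)$ and the last $H$-letter of your word happens to coincide with the first $H$-label on the geodesic from $y$ to $y'$, the nesting persists.

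What is missing is the one idea the paper's proof hinges on and that your outline never invokes: the hypothesis $|H|\ge 3$. The paper fixes $g\in G\setminus\{1\}$ and two distinct $h,h'\in H\setminus\{1\}$; given a nested pair $\shadow(y')\subset\shadow(y)$, one of $h,h'$ differs from the first $H$-label $h_1$ on the geodesic from $y$ to $y'$, and applying that $gh$ (or $gh'$) strictly decreases the number of pairs with intersecting shadows while preserving disjoint pairs. Iterating at most $|M|$ times yields $w$. This adaptive choice at each step is the heart of the argument and cannot be replaced by a distance argument in the style of Claim~3. (Your step~(iii) is also muddled: right-multiplying $w$ by $h$ changes the \emph{first} action step $y\mapsto \beta(h)y$, not the last, so it is not a harmless one-edge adjustment of the endpoints.)
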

\begin{proof}
Let $g\in G\setminus\{1\}$ and $h\neq h'\in H\setminus\{1\}$ (recall that $H$ has at least 3 elements). Let $y\neq y'\in M$. Then,
$$
\shadow((gh)^\beta(y))\subseteq \shadow(y) \text{ and } \shadow((gh)^\beta(y'))\subseteq \shadow(y').
$$
Thus if $\shadow(y)\cap \shadow(y')=\emptyset$, then $\shadow((gh)^\beta(y))\cap\shadow((gh)^\beta(y'))$ is also empty. Notice that this argument works as well if we take $w=gh'$.

Now let us suppose that $\shadow(y)\cap \shadow(y')\neq\emptyset$. Without loss of generality, we suppose that $\shadow(y')$ is contained in $\shadow(y)$. Notice that $d(y,y')\geq 2$ since $y$, $y'\in M$ and $y\neq y'$. Let $h_1\in H$ and $g_1\in G$ the labels of the first two edges of the geodesic from $y$ to $y'$ in $X_{\beta}$. One of the elements $h$, $h'$, say $h$, is different from $h_1$. Thus $\shadow((gh)^\beta(y))$ is disjoint to $\shadow(y')$ and $\shadow((gh)^\beta(y'))$.

Therefore, if given a finite subset $S\subset X\setminus K$ we denote by $n_s(S)$ the number of pairs $(q,q')\in S\times S$ such that $\shadow(q)\cap \shadow(q')\neq \emptyset$, then we have
$$
n_s((gh)^\beta(M))< n_s(M) \text{ or } n_s((gh')^\beta(M))< n_s(M).
$$
In addition, Claim \ref{Claim2} guaranties that $(gh)^\beta(M)$ and $(gh')^\beta(M)$ do not intersect with $K$. By repeating this operation at most $|M|$ times, we obtain an element $w$ as we wished.
\end{proof}

\begin{proof}[End of the proof of Theorem \ref{ThmFiniteFactor}.]
Now let
$$
M:=v_1^\beta(K)\sqcup v_2^\beta(K)
$$
where $v_1$, $v_2$ are the elements as in Claim \ref{Claim3}. Then there is $w$ as in Claim \ref{Claim4}. We thus have two elements $w_j = wv_j \in G*H$ ($j=1,2$) such that
\begin{enumerate}
\item[$\bullet$] the normal form of $w_j$ ($j=1,2$) starts with an element of $G$;
\item[$\bullet$] the set $\Kbar$ and the shadows of the elements of $w_1^\beta(K) \sqcup w_2^\beta(K)$ are pairwise disjoint.
\end{enumerate}
In addition, the $\beta$-trajectory of $w_1$ and $w_2$ from the points in $K$ do not intersect with the shadows of the points of $w_1^\beta(K) \sqcup w_2^\beta(K)$ before their last points, since as soon as the $\beta$-trajectories leave $K$, they are geodesic lines by Claim \ref{Claim2}.

We then define an action $\alpha$ by modifying $\beta$ as follows (cf. Figure \ref{ActionAlpha}). We choose $k(|H|-2)$ points $p_{i,j}$ (where $3\leq i \leq |H|$ and $1\leq j \leq k$), outside of $K$ and such that:
\begin{enumerate}
 \item[$\bullet$] their $G$-orbits are pairwise disjoint;
 \item[$\bullet$] for each $i$, $j$, the $H$-orbit of $p_{i,j}$ is contained in $\shadow(p_{i,j})$;
 \item[$\bullet$] $\shadow(p_{i,j})$ does not intersect with the $\beta$-trajectory of $w_1$ and of $w_2$ from the points of $K$.
\end{enumerate}

 Let $h$ be a non trivial element of $H$. For each $j=1,\ldots,k$, we put $H$ in bijection with $$
 A_j = \{w_1^\beta(x_j),w_2^\beta(y_j), p_{3,j},\ldots,p_{|H|,j}\}
 $$
in such a way that $w_1^\beta(x_j)$ corresponds to $1$ and $w_2^\beta(y_j)$ corresponds to $h$. We then transfer the $H$-action by left multiplication in order to define $\alpha$ on the $A_j$'s. The other points of $\beta(H)$-orbits of the families $A_j$, are defined as fixed points under the $\alpha(H)$-action. For every other points, we set $\alpha(x) = \beta(x)$ so that we have in particular $\alpha|_F=\beta|_F$.

\begin{figure}
\centering
\includegraphics[width=13cm]{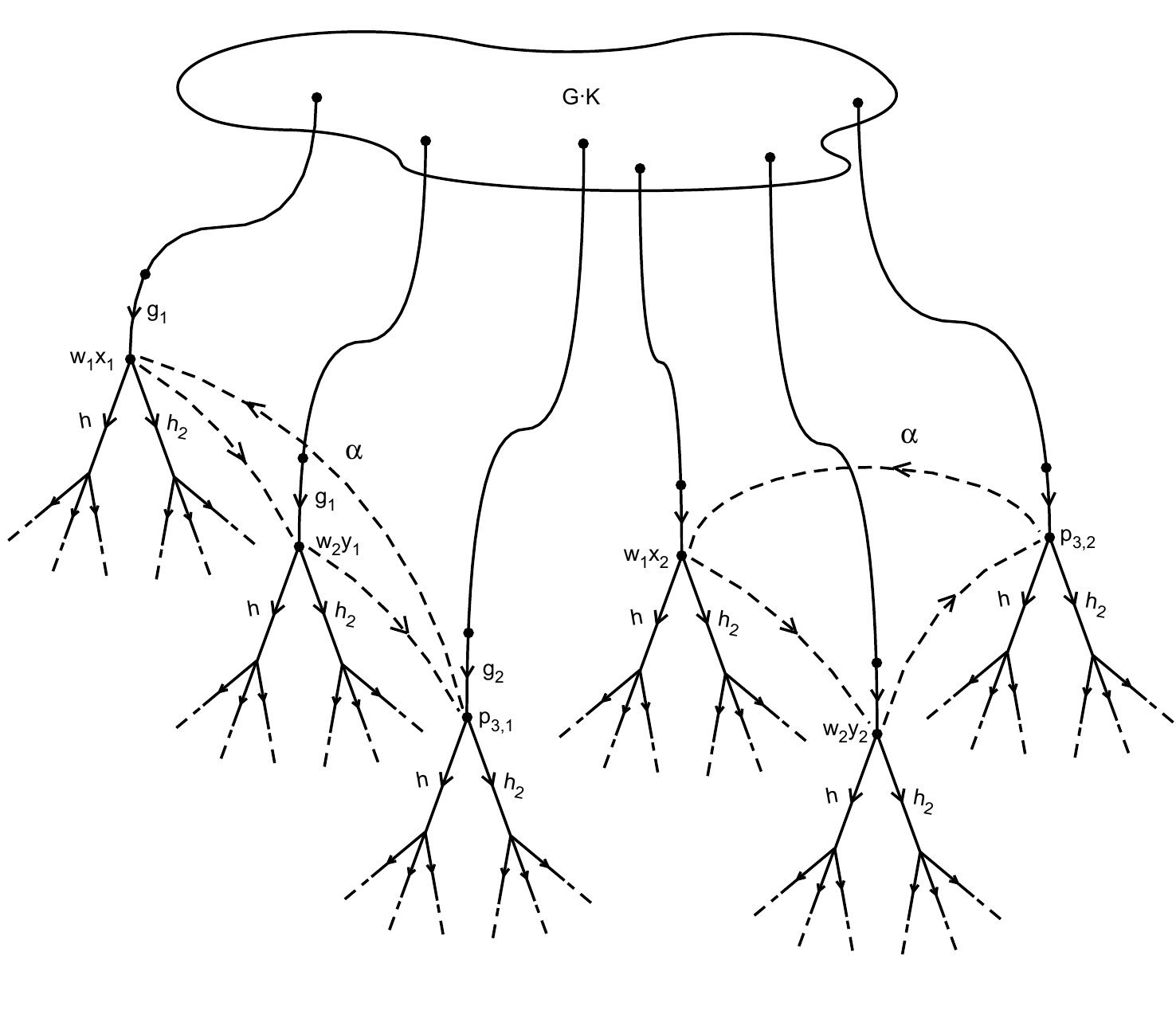}
\caption{Schreier graph of the action associated to $\alpha$}
\label{ActionAlpha}
\end{figure}

By construction, the $\langle G,\alpha(H)\rangle$-orbits are infinite since the Schreier graph of each orbit contains an infinite subtree. Furthermore we have
$$
(w_2^{-1} h w_1)^\alpha(x_j) = y_j
$$
for every $j$, and therefore $\alpha\in\Uc_{k,\bar x, \bar y}$. Besides, $\sigma$, $\beta$ and $\alpha$ coincide on $F$, which proves that $\Uc_{k,\bar x, \bar y}$ is dense in $\Ac$.

Finally, if $\sigma$ is in $\Ac$ and $F$ is the finite set of $X$ as before, then this $\beta$-action is faithful as well and $\sigma|_F=\beta|_F$. Therefore $\Uc'_w$, for $w\in (G*H)\setminus\{1\}$, are dense. This achieves the proof of Theorem \ref{ThmFiniteFactor}.
\end{proof}

\begin{proof}[Proof of Theorem \ref{main}.]
In case $G\simeq\Z/2\Z\simeq H$, the group $G*H$ is isomorphic to the infinite dihedral group, which has trivial center but it contains a cyclic subgroup of index 2. Hence $G*H$ does not admit any faithful and highly transitive action by Corollary \ref{GeneralFactsFHT}.

If at least one of the factors $G$, $H$ is not isomorphic to the cyclic group $\Z/2\Z$, By Theorem \ref{FreeProductInfinite} and Theorem \ref{ThmFiniteFactor} it admits a faithful and highly transitive actions.
\end{proof}


\end{document}